\documentclass[11pt]{amsart}

\usepackage[T1]{fontenc}
\usepackage[utf8]{inputenc}
\usepackage{lmodern}
\usepackage{amsmath,amssymb,amsthm,mathtools}
\usepackage{microtype}
\usepackage[margin=1in]{geometry}
\usepackage{enumitem}
\usepackage[colorlinks=true,linkcolor=blue,citecolor=blue,urlcolor=blue]{hyperref}

\allowdisplaybreaks
\numberwithin{equation}{section}

\newtheorem{theorem}{Theorem}[section]
\newtheorem{proposition}[theorem]{Proposition}
\newtheorem{lemma}[theorem]{Lemma}
\newtheorem{corollary}[theorem]{Corollary}
\theoremstyle{remark}
\newtheorem{remark}[theorem]{Remark}

\DeclareMathOperator{\lcm}{lcm}
\DeclareMathOperator{\Var}{Var}

\newcommand{\PP}{\mathbb P}
\newcommand{\EE}{\mathbb E}
\newcommand{\one}{\mathbf 1}
\newcommand{\vp}{\nu}

\newcommand{\von}{\Lambda}
\newcommand{\ZZ}{\mathbb Z}
\newcommand{\II}{\mathcal I}
\newcommand{\normal}{\mathcal N}

\newcommand{\fracp}[1]{\left\{#1\right\}}

\title[Erd\H{o}s Problem 684 at density one]{Erd\H{o}s Problem 684 at Density One: Small-prime Parts of Binomial Coefficients and Gaussian Fluctuations}

\author[E. Li]{Eric Li}
\dedicatory{\normalfont\normalsize Trinity College, University of Cambridge, United Kingdom}
\date{June 6, 2026}
\thanks{Email addresses: \href{mailto:el593@cam.ac.uk}{el593@cam.ac.uk}, \href{mailto:contact@ericli.com}{contact@ericli.com}.}

\subjclass[2020]{Primary 11B65; Secondary 11N05, 11N37, 11A51, 60F05}
\keywords{Erd\H{o}s Problem 684, binomial coefficients, Kummer's theorem, prime factors, normal order, central limit theorem, small-prime parts}

\hypersetup{
  pdftitle={Erdos Problem 684 at Density One: Small-prime Parts of Binomial Coefficients and Gaussian Fluctuations},
  pdfauthor={Eric Li},
  pdfsubject={Erdos Problem 684 at density one; small-prime parts of binomial coefficients},
  pdfkeywords={Erdos Problem 684, binomial coefficients, Kummer theorem, prime factors, normal order, central limit theorem, small-prime parts}
}

\begin{document}

\begin{abstract}
For $0\leq k\leq n$, let $u(n,k)$ be the largest divisor of $\binom nk$ whose prime factors are at most $k$.  Erd\H{o}s Problem \#684 concerns the special threshold $u(n,k)>n^2$ and asks how early this small-prime part can be forced to become large.  We prove the density-one analogue for every fixed power threshold.  If $f_c(n)$ is the least $k$ for which $u(n,k)>n^c$, then, for each fixed $c>0$,
\[
        f_c(n)=\left(\frac{c}{1-\gamma}+o(1)\right)\log n
\]
for almost all positive integers $n$.  In particular,
\[
        f_2(n)=\left(\frac{2}{1-\gamma}+o(1)\right)\log n
        =(4.730544237\ldots+o(1))\log n
\]
for the Erd\H{o}s \#684 threshold.  This is a normal-order theorem, not a pointwise resolution of the corresponding worst-case problem.

The constant $1-\gamma$ is arithmetic.  Kummer's theorem rewrites $\log u(n,k)$ as a sum of carry indicators, and complete-residue averaging gives
\[
        m(k)=k\sum_{p\leq k}\frac{\log p}{p-1}-\log k!=(1-\gamma)k+o(k).
\]
The cancellation in this formula moves the typical crossing from the naive scale $c\log n$ to $c(1-\gamma)^{-1}\log n$.  We prove the required concentration uniformly for every $k\leq A\log X$ on one dyadic interval, after discarding a zero-density exceptional set caused by large powers of small primes dividing one of the nearby integers $n,n-1,\ldots$.

We also prove Gaussian fluctuations in the logarithmic range.  If $k=k(X)\to\infty$, $k\leq A\log X$, and $n$ is uniform in $[X,2X)\cap\mathbb Z$, then
\[
        \frac{\log u(n,k)-m(k)}{\sqrt{V(k)}}\Rightarrow \mathcal N(0,1),
        \qquad
        V(k)\sim (2-\log(2\pi))k\log k.
\]
Higher prime powers are needed for the mean, but after centering their aggregate is $L^2$-negligible on the Gaussian scale; the variance comes only from the prime levels.
\end{abstract}

\maketitle

\section{Introduction}

All logarithms are natural.  For a prime $p$, write $\vp_p$ for the $p$-adic valuation.  For $0\leq k\leq n$, define
\[
        u(n,k):=\prod_{p\leq k}p^{\vp_p\binom nk}.
\]
Thus $u(n,k)$ is the largest divisor of $\binom nk$ all of whose prime factors are at most $k$.  In particular, $u(n,0)=u(n,1)=1$.  For a fixed real number $c>0$, put
\[
        f_c(n):=\min\{0\leq k\leq n:u(n,k)>n^c\},
\]
with the convention that $f_c(n)=\infty$ if the set is empty.

The case $c=2$ is a density-one counterpart of Erd\H{o}s Problem \#684.  In that problem one writes, for every $0\leq k\leq n$,
\[
        \binom nk=uv,
\]
where the primes dividing $u$ are in $[2,k]$ and the primes dividing $v$ are in $(k,n]$, and asks for bounds on the least $k$ for which $u>n^2$; see Erd\H{o}s \cite{Erdos1979} and the problem record \cite{Bloom684}.  The theorem below addresses the natural-density-one version of a more general threshold $u(n,k)>n^c$.  It determines the main term of the first crossing for almost all $n$, but it should not be read as a worst-case estimate.  Recent related work gives polylogarithmic worst-case upper bounds and logarithmic lower-bound examples; see Alexeev, Putterman, Sawhney, Sellke, and Valiant \cite{APSSV2026}.

The density-one formulation is not a cosmetic weakening of a pointwise assertion.  The proof removes the exceptional integers $n$ for which a large power of a small prime divides one of the short list of nearby integers $n-b$, $0\leq b\leq A\log X$.  These congruence events have total density tending to zero, but individual worst-case integers can be dominated by exactly this kind of local obstruction.  Thus the result should be viewed as a normal-order theorem for the first small-prime crossing, analogous in spirit to normal-order statements for additive arithmetic functions rather than to a uniform bound valid on every input.

\begin{theorem}\label{thm:main}
Let $c>0$ be fixed.  For $\eta>0$, let $E_{c,\eta}(N)$ be the set of integers $2\leq n\leq N$ such that either $f_c(n)=\infty$, or $f_c(n)<\infty$ and
\[
        \left|\frac{f_c(n)}{\log n}-\frac{c}{1-\gamma}\right|>\eta.
\]
Then
\[
        \lim_{N\to\infty}\frac{\#E_{c,\eta}(N)}{N}=0.
\]
Equivalently,
\[
        f_c(n)=\left(\frac{c}{1-\gamma}+o(1)\right)\log n
\]
for almost all positive integers $n$.
\end{theorem}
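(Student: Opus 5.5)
We reduce everything to a uniform concentration statement for $\log u(n,k)$ around its arithmetic mean $m(k)$, for $n$ in a dyadic block $[X,2X)$. Since the exceptional sets are only needed with density zero, it suffices to show: for fixed $\eta>0$ and each large $X$, all but $o(X)$ integers $n\in[X,2X)$ satisfy $f_c(n)\in[(\kappa-\eta)\log n,(\kappa+\eta)\log n]$, where $\kappa=c/(1-\gamma)$. Summing over dyadic blocks up to $N$ then gives $\#E_{c,\eta}(N)=o(N)$. On such a block, $\log n=\log X+O(1)$, so the targets become $k_-=\lfloor(\kappa-\eta)\log X\rfloor$ and $k_+=\lceil(\kappa+\eta)\log X\rceil$, with $A=\kappa+1$.

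The first step is the mean. By Kummer's theorem (Legendre in product form),
\[
\log u(n,k)=\sum_{p\le k}\log p\sum_{j\ge1}\Bigl(\Bigl\lfloor \tfrac n{p^j}\Bigr\rfloor-\Bigl\lfloor\tfrac{n-k}{p^j}\Bigr\rfloor-\Bigl\lfloor\tfrac k{p^j}\Bigr\rfloor\Bigr).
\]
For $p^j\le X^{1/2}$, averaging $n$ over complete residue classes mod $p^j$ gives $k/p^j-\lfloor k/p^j\rfloor$ up to $O(p^j/X)$. Summing over $p,j$ yields $k\sum_{p\le k}\log p/(p-1)-\log k!$ up to tail terms, and Mertens plus Stirling give $m(k)=(1-\gamma)k+o(k)$, as in the abstract.

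The second step, which is the core, is the concentration. Write the $p^j$-term as the carry indicator $\mathbf 1[\{n/p^j\}<\{k/p^j\}]$, which is the same as $\mathbf 1[n\bmod p^j<k\bmod p^j]$. For $p^j>k$ this event says that $p^j$ divides one of $n,n-1,\dots,n-k+1$. We truncate at $p^j\le k$ and compute the variance there, since the variables at different primes are nearly independent by CRT over $n\in[X,2X)$ once $\prod$ of moduli is $\le X^{1/2}$. This gives variance $O(k\log k)$, consistent with $V(k)$, so Chebyshev yields $|\log u(n,k)-m(k)|\le \epsilon k$ off a set of size $O(X\log k/(\epsilon^2 k))$.

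The large prime powers $p^j>k$ contribute at most $\sum_{b<k}\log(\text{$k$-smooth part of } n-b\text{ restricted to } p^j>k)$. We discard the $n$ for which some $n-b$ with $b\le A\log X$ is divisible by some $p^j>(\log X)^{3}$ with $p\le A\log X$. This set has density at most $A\log X\sum_{p}\,(\log X)^{-3}\cdot O(\log\log X)=o(1)$. On the remaining $n$, each $n-b$ contributes at most $\pi(k)\cdot 3\log\log X$, but only when a $p^j\in(k,(\log X)^3]$ divides it. The expected total of these terms is $\sum_{p\le k}\log p\sum_{p^j>k}k/p^j=O(k\cdot \sum_p \log p/\max(p,k/p)\ldots)$, which is really $O(\sqrt k)$-ish, i.e. $o(k)$. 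Markov then makes it $o(k)$ off a density-$o(1)$ set.

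The third step is the monotonicity issue, which I expect to be the main obstacle. The first crossing requires control for \emph{all} $k<k_-$, not just at one $k$, because $u(n,k)$ is not monotone in $k$. We want $\log u(n,k)\le c\log X$ for every $k\le k_-$, and $\log u(n,k_+)>c\log X$. The latter is a single-$k$ statement and follows from the second step. For the former, a union bound over $k\le k_-$ with Chebyshev is too weak. Instead we use a deterministic upper bound that is monotone-friendly:
\[
\log u(n,k)\le \sum_{b<k}\log(\text{$k$-smooth part of }(n-b))-\log k!.
\]
Alternatively, we apply the second step at a mesh of $O(1/\eta)$ values $k_i=\lfloor i\eta\log X\rfloor$ and bound increments between mesh points. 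From $k$ to $k'$, $\log u$ changes by at most $\log\binom{n-k}{k'-k}$ restricted to small primes plus the change in the prime set. On the non-exceptional $n$ this is $\le(k'-k)(1-\gamma+\epsilon)+o(\log X)$, obtained again via the smooth-part bound averaged over the short window.

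For small $k\le\delta\log X$ we use the trivial bound $\log u(n,k)\le \sum_{p\le k}\log p\cdot\log_p(n)\cdot$. The better route is $u(n,k)\mid \lcm(n-k+1..n)$-smooth part, which gives $\le k\cdot O(\log\log X)+o$. Thus it is $<c\log X$ for $\delta$ small. Combining the mesh with the increment control completes the lower bound $f_c(n)\ge k_-$ for almost all $n$.
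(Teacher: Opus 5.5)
\textbf{There is a genuine gap in Step 3, the lower bound $f_c(n)>k_-$.} Steps 1 and 2 are essentially sound. They give the upper bound $f_c(n)\le k_+$: at the single value $k_+$ you can drop the nonnegative levels $p^j>k_+$ and apply Chebyshev to the levels $p^j\le k_+$. The lower bound, however, needs $\log u(n,k)\le c\log X$ for every $k\le k_-$ simultaneously, and neither route you sketch proves this.

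\emph{The sandwich route.} Your ``monotone-friendly'' inequality is in fact the identity $\log u(n,k)=S(n,k)-\log k!$, where $S(n,k)=\sum_{b<k}\log P_k(n-b)$ and $P_k(m)$ is the $k$-smooth part of $m$. It is true that $S$ is non-decreasing in $k$. But sandwiching $k_i<k\le k_{i+1}$ only gives
\[
\log u(n,k)\le \log u(n,k_{i+1})+\log(k_{i+1}!/k!).
\]
At your spacing $k_{i+1}-k_i\approx\eta\log X$, the loss is of order $\eta\log X\log\log X$, which far exceeds $\log X$.

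\emph{The increment route.} Here the key bound is asserted, not proved. Write $L=\log X$. You need $\log u(n,k')-\log u(n,k_i)\le(1-\gamma+\epsilon)(k'-k_i)+o(L)$ for every $k'$ in a window of $\asymp\eta L$ integers. That is again a supremum over logarithmically many $k'$ of a quantity you control only in $L^2$. Its variance is $\gg\eta L\log L$ when $k'-k_i\asymp\eta L$, so Chebyshev plus a union bound fails for exactly the reason you identified yourself. ``Averaging over the short window'' controls the mean, not the supremum. Even the mean of your upper bound through $\binom{n-k}{k'-k}$ is about $(1-\gamma)\ell+\ell\log(k/\ell)$ with $\ell=k'-k$, which is larger than claimed.

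\emph{A smaller slip.} In the small-$k$ step, $k\cdot O(\log\log X)$ is not $<c\log X$. You need $\pi(k)\cdot O(\log\log X)$, which the lcm bound does give off your exceptional set.

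\textbf{How the paper gets uniformity.} It uses a stronger per-$k$ tail bound. It discards the $o(X)$ integers $n$ for which some $p^a>X^{1/10}$ with $p\le A\log X$ divides some $n-b$, $0\le b\le A\log X$. Then every lcm of four remaining moduli is at most $X^{2/5}$, and the CRT independent model gives
\[
\EE_X|U_k^{\leq Q_0}(n)-m(k)|^4\ll L^2(\log L)^2 .
\]
Markov then gives failure probability $O((\log L)^2/L^2)$ for each $k$, which survives the union over all $O(L)$ integers $k\le AL$.

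\textbf{How to repair your own argument.} Your identity does close the gap, but only with a much finer mesh. Take spacing $h\asymp\delta L/\log L$. The sandwich loss is then $h\log(AL)\ll\delta L$, and there are only $O(\log L/\delta)$ mesh points. At each point your Chebyshev bound (variance $O(L\log L)$) and your Markov bound for the nonnegative levels $p^j>k$ (mean $O(\sqrt{L})$) cost $O_\delta(L^{-1/2})$. The total exceptional proportion is therefore $O_\delta(L^{-1/2}\log L)=o(1)$. This would be a genuine second-moment alternative to the paper's fourth-moment argument, exploiting a monotone quantity the paper deliberately avoids. As written, though, the proposal does not contain it.
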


Taking $c=2$ gives the density-one consequence for the original Erd\H{o}s threshold.

\begin{corollary}\label{cor:erdos684}
For almost all positive integers $n$,
\[
        f_2(n)=\left(\frac{2}{1-\gamma}+o(1)\right)\log n
        =(4.730544237\ldots+o(1))\log n.
\]
\end{corollary}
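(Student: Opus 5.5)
The corollary is the case $c=2$ of Theorem~\ref{thm:main}, so the plan is mostly to prove the theorem and then do a numerical check.

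\textbf{Setup via Kummer.} By Kummer's theorem, $\vp_p\binom nk$ is the number of carries when adding $k$ and $n-k$ in base $p$. Hence
\[
\log u(n,k)=\sum_{p\le k}\log p\sum_{j\ge1}\one\big[\fracp{n/p^j}<\fracp{k/p^j}\big].
\]
Averaging over a complete residue system modulo $p^j$ gives expectation $\fracp{k/p^j}$ for each indicator. Summing over $j$ and $p$ and using Legendre's formula $\log k!=\sum_{p}\log p\sum_j\lfloor k/p^j\rfloor$ gives the mean
\[
m(k)=k\sum_{p\le k}\frac{\log p}{p-1}-\log k!.
\]
The prime number theorem in the form $\sum_{p\le k}\log p/(p-1)=\log k-\gamma+o(1)$ (Mertens), together with Stirling, then yields $m(k)=(1-\gamma)k+o(k)$.

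\textbf{Concentration.} Fix a dyadic range $n\in[X,2X)$ and $A=2c/(1-\gamma)$. I would show that, outside an exceptional set of density $o(1)$,
\[
|\log u(n,k)-m(k)|\le \varepsilon\log X \quad\text{simultaneously for all } k\le A\log X.
\]

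1. For moduli $p^j\le X^{1/2}$, compute the variance by equidistribution of $n$ modulo $\lcm(p^j,q^i)$. Distinct moduli are independent up to $O(p^jq^i/X)$ errors, so $\Var\ll \sum_{p\le k}(\log p)^2\ll k\log k$, which is $o((\log X)^2)$. Chebyshev then bounds the failure probability for each fixed $k$.

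2. Uniformity in $k$ is the main obstacle. A union bound over the $\asymp\log X$ values of $k$ costs a factor $\log X$. My first attempt would be a fourth-moment bound, giving $\ll (k\log k)^2/(\varepsilon\log X)^4$ per $k$, which is summable in a weak sense. If that falls short, I would exploit monotone structure instead: $\log u(n,k+1)-\log u(n,k)=\log\frac{n-k}{k+1}$ restricted to small primes. This increment is governed by the factorization of $n-k$, which lets me check only $k$ on a grid of spacing $\varepsilon\log X$ and interpolate.

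3. For large $j$ with $p^j>X^{1/2}$, the carry indicator at level $j$ is typically forced by the top digits. Nonzero contributions beyond the mean arise only when $p^j$ is large and divides a nearby $n-b$ with $b\le k$. I discard all $n$ such that some $p^j\ge (\log X)^{3}$ with $p\le A\log X$ divides some $n-b$, $0\le b\le A\log X$. The density of this set is
\[
\ll \sum_{p\le A\log X}(\log X)\cdot(\log X)^{-3}\,p\ll (\log X)^{-2}\cdot\frac{(\log X)^2}{\log\log X}\to0.
\]
After this removal, the levels $j$ with $p^j>(\log X)^3$ contribute deterministically up to $O(k/\log X)$ per prime. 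Equivalently, their sum matches the corresponding part of $m(k)$ up to $O(\pi(k))=o(\log X)$.

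\textbf{Crossing.} On the good set, $\log u(n,k)=(1-\gamma)k+o(\log X)$ uniformly for $k\le A\log X$. For $k\le(c/(1-\gamma)-\eta)\log n$ this is below $c\log n$, and at $k=\lceil (c/(1-\gamma)+\eta)\log n\rceil$ it exceeds $c\log n$. So $f_c(n)$ lies in the required window; in particular $f_c(n)<\infty$. Summing over dyadic blocks gives density zero for $E_{c,\eta}(N)$.

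\textbf{Corollary.} Setting $c=2$ gives the corollary, and $2/(1-\gamma)=2/0.4227843\ldots=4.7305442\ldots$.
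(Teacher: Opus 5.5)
Your proposal is correct and follows the paper's route: the corollary is the case $c=2$ of Theorem~\ref{thm:main}. You prove that theorem as the paper does, via Kummer's residue form, the complete-residue mean $m(k)=(1-\gamma)k+o(k)$, discarding the $n$ for which a large power of a small prime divides some $n-b$, a fourth-moment Markov bound union-bounded over the $O(\log X)$ values of $k$, and the two-sided crossing argument, with your cutoff $(\log X)^3$ playing the role of the paper's $Q_0=X^{1/10}$.

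A few pieces need tidying. Make $(\log X)^3$ the only truncation, since four moduli up to $X^{1/2}$ can have least common multiple far beyond $X$ and the periodic-averaging error then swamps the fourth-moment bound. Group the nested levels $p^j$ of each prime as in Lemma~\ref{lem:tower_moments} rather than treating them as independent. Drop the ``monotone'' fallback: it is unnecessary, and it would not work as stated because $U_k(n)$ is not monotone in $k$.
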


Our second result records the Gaussian fluctuations of the carry sum at a fixed logarithmic value of $k$.  Put
\[
        U_k(n):=\log u(n,k).
\]
For a real number $X\geq3$, let
\[
        \II_X:=\{n\in\ZZ:X\leq n<2X\},
\]
and let $\PP_X$ and $\EE_X$ denote probability and expectation when $n$ is chosen uniformly from $\II_X$.  For an integer $q\geq1$, $[x]_q$ denotes the least non-negative residue of the integer $x$ modulo $q$.

For $k\geq1$, define the complete-residue mean
\[
        m(k):=\sum_{p\leq k}\log p\sum_{a\geq1}\frac{[k]_{p^a}}{p^a}.
\]
The inner series defining $m(k)$ is convergent, since $0\leq [k]_{p^a}/p^a\leq k/p^a$.  For $k\geq2$, define
\[
        \alpha_p(k):=\frac{[k]_p}{p}=\fracp{\frac{k}{p}},
        \qquad
        V(k):=\sum_{p\leq k}(\log p)^2\alpha_p(k)(1-\alpha_p(k)).
\]

\begin{theorem}[Gaussian fluctuations]\label{thm:clt}
Fix $A>0$.  Let $k=k(X)$ be an integer-valued function such that
\[
        k\to\infty,
        \qquad
        k\leq A\log X.
\]
Then, as $X\to\infty$,
\[
        V(k)=(2-\log(2\pi)+o(1))k\log k
\]
and
\[
        \frac{U_k(n)-m(k)}{\sqrt{V(k)}}\Rightarrow \normal(0,1)
        \qquad(n\text{ uniform in }\II_X).
\]
In addition,
\[
        \EE_X U_k(n)=m(k)+o(\sqrt{V(k)}),
        \qquad
        \Var_X(U_k(n))\sim V(k),
\]
and consequently
\[
        \frac{U_k(n)-\EE_X U_k(n)}{\sqrt{\Var_X(U_k(n))}}
        \Rightarrow \normal(0,1).
\]
\end{theorem}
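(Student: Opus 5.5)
Via Kummer's theorem, $\vp_p\binom nk$ counts the borrows when subtracting $k$ from $n$ in base $p$, equivalently
\[
        \vp_p\binom nk=\sum_{a\geq1}\one\bigl\{[n]_{p^a}<[k]_{p^a}\bigr\},
\]
so that
\[
        U_k(n)=\sum_{p\leq k}\log p\sum_{a\geq1}\xi_{p,a}(n),
        \qquad \xi_{p,a}(n):=\one\bigl\{[n]_{p^a}<[k]_{p^a}\bigr\}.
\]
If $n$ is uniform modulo $p^a$, then $\EE\xi_{p,a}=[k]_{p^a}/p^a$, which gives $m(k)$ exactly. The plan is to split $U_k=P_k+H_k$, where $P_k$ collects the prime levels $a=1$ and $H_k$ the levels $a\geq2$, and to treat each separately.

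\textbf{Step 1 (variance asymptotic).} Write $V(k)=\sum_{p\leq k}(\log p)^2(\alpha_p-\alpha_p^2)$. For $p>\sqrt k$ one has $\lfloor k/p\rfloor$ small, and grouping primes by $j=\lfloor k/p\rfloor$ gives $\alpha_p=k/p-j$. By the prime number theorem, $\sum_{p\le k}(\log p)^2 g(k/p)\approx \log k\int_1^\infty g(\{t\})\,k\,dt/t^2$ for $g(x)=x-x^2$. Since
\[
        \int_1^\infty \fracp{t}(1-\fracp{t})\,\frac{dt}{t^2}=2-\log(2\pi)
\]
(a standard computation via Stirling/Euler–Maclaurin, the same source as $1-\gamma$ for the first moment), and the primes $p\leq k/\log k$, or more crudely $p\leq\sqrt k$, contribute $O(k)$ after weighting, this yields the claim.

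\textbf{Step 2 (prime levels: CLT).} The $\xi_{p,1}$ for distinct primes depend only on $n\bmod p$. I would use the method of moments, or a Kubilius-model comparison: for a fixed moment order $r$, the joint moments of $\{\xi_{p,1}\}_{p\le k}$ under $n\in\II_X$ agree with those of independent Bernoulli($\alpha_p$) variables up to an error $O(\prod p/X)$. The difficulty is that $\prod_{p\le k}p=e^{(1+o(1))k}$ could exceed $X$ when $k\le A\log X$ with $A>1$. To handle this I would compute moments of $\sum_p \log p(\xi_{p,1}-\alpha_p)$ expanded into $r$-fold products, where each term involves at most $r$ primes, so the modulus is at most $k^r\ll X^{o(1)}$ and the error is $O(k^{r}(\log k)^r X^{-1+o(1)})$, which is negligible. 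The independent model then satisfies Lindeberg, since the summands are bounded by $\log k=o(\sqrt{k\log k})$, so its moments converge to Gaussian ones; hence $P_k$ is asymptotically normal with variance $V(k)$ and mean $\sum\log p\,\alpha_p$ up to $O(k^r/X)$.

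\textbf{Step 3 (higher levels are negligible).} This is the step I expect to be the main obstacle. Each $\xi_{p,a}$ with $p^a\le X^{1/2}$ has exact mean $[k]_{p^a}/p^a+O(p^a/X)$. I would bound $\EE_X(H_k-\EE H_k)^2$ by $o(k\log k)$. Covariances between different primes vanish up to $O(p^aq^b/X)$ when $p^aq^b\le X^{1/2}$. For the diagonal at a fixed prime, note that $\xi_{p,a}$ with $p^a>k$ equals $\one\{[n]_{p^a}<k\}$, of probability $k/p^a$. Then
\[
        \Var\Bigl(\sum_{a\ge2}\xi_{p,a}\Bigr)\ll \Bigl(\sum_{a\ge2}\sqrt{\min(1,k/p^a)}\Bigr)^2,
\]
which after weighting by $(\log p)^2$ sums over $p\le\sqrt k$ to $O(\sqrt k\,\mathrm{polylog}\,k)$ for $p^2\le k$, plus the terms with $p^2>k$, which contribute $\sum_{p}(\log p)^2 k/p^2\ll \sqrt k\log k$. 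Levels with $p^a>X^{1/2}$ are handled by the crude bound $\PP(p^a\text{-borrow})\le\PP(\exists\, b\le k: p^{a}\mid\text{something near } n)\ll k\cdot k/p^a$ per level, together with a truncation in $a$ at $p^a\le 2X$, since beyond that level no borrow occurs. The cross-prime covariance terms with large moduli need care: I would bound them by Cauchy–Schwarz on the few pairs with $p^aq^b>X^{1/2}$, which have tiny probability.

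\textbf{Step 4 (conclusion).} Combining these, $U_k-m(k)=(P_k-\EE P_k)+(H_k-\EE H_k)+o(\sqrt{V})$, with the second term $L^2$-small. Slutsky's lemma then gives the CLT together with $\EE_X U_k=m(k)+o(\sqrt V)$ and $\Var_X U_k\sim V(k)$, and the final normalized statement follows.
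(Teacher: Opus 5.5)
Your proposal is correct, and its architecture matches the paper's. Both use Kummer in residue form and split into prime levels and levels $a\geq2$. Both model the prime levels by independent Bernoulli variables, with moments transferred through products of at most $r$ residue functions (modulus at most $k^r=X^{o(1)}$). Both then show the higher levels are $L^2$-negligible and finish with Slutsky.

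The genuine difference is in Step 3. You bound the variance of each tower $\sum_{a\geq2}\xi_{p,a}$ directly on $\II_X$ by Minkowski's inequality. This uses $\PP_X(\xi_{p,a}=1)\ll\min(1,k/p^a)$ from a short-window count, with $\xi_{p,a}\equiv0$ on $\II_X$ once $p^a>2X$, so the $L^2$ norms decay geometrically once $p^a>k$. Cross-prime covariances are $O(p^aq^b/X)$ by periodic averaging and CRT when $p^aq^b\leq X^{1/2}$. Otherwise one of the two moduli exceeds $X^{1/4}$, so that indicator has variance $\ll kX^{-1/4}$, and Cauchy--Schwarz over the polylogarithmically many pairs makes them negligible.

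The paper instead cuts at $Q_1=\exp(\sqrt{L})$ and disposes of the tail through $\EE_XT^2\leq\|T\|_\infty\EE_XT$. For the truncated part it passes to the complete-residue model and uses the nestedness of the events $\{N_p\bmod p^a<k\}$ for $p^a>k$. Your version is slightly more elementary: it needs no nestedness and no CRT for same-prime pairs. The paper's version reuses the tower-moment machinery it already built for the fourth-moment lemma.

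A few points need tightening.

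\textbf{Step 1.} With a cutoff at $p\leq k/\log k$, the weight $(\log t)\,g(\{k/t\})$ has $\asymp\log k$ humps and total variation $\asymp(\log k)^2$ on $[k/\log k,k]$. So $\theta(x)\sim x$ alone does not give an $o(k\log k)$ error. You need either a PNT error term such as $\theta(x)=x+O(x/(\log x)^2)$, or, as the paper does, a treatment of $p>k/(M+1)$ with $M$ fixed and $M\to\infty$ at the end.

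\textbf{Step 2.} Lindeberg gives only convergence in distribution of the model. The method of moments also needs convergence of the model's moments. This is immediate here because the summands are uniformly small: every cumulant of order $r\geq3$ is $\ll\sum_{p\leq k}(\log p)^r/V(k)^{r/2}\to0$. Rosenthal's inequality, as in the paper, also works.

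\textbf{Conclusion.} The statements $\EE_XU_k=m(k)+o(\sqrt{V(k)})$ and $\Var_XU_k\sim V(k)$ do not come from Slutsky. They follow from the $r=1,2$ moment transfer together with your $L^2$ bound on the higher levels.
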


\subsection*{Proof strategy}

Kummer's theorem converts $\vp_p\binom nk$ into a count of carries, equivalently into residue inequalities.  The exact form used here is
\begin{equation}\label{eq:kummer-intro}
        U_k(n)=\sum_{p\leq k}\sum_{a\geq1}(\log p)\one_{[n]_{p^a}<[k]_{p^a}}
\end{equation}
whenever $0\leq k\leq n$.  A complete-residue average of the summand with modulus $p^a$ is $[k]_{p^a}/p^a$, giving the deterministic mean $m(k)$.  A naive first-order estimate would suggest a coefficient $1$ in the crossing threshold, but the exact complete-residue mean satisfies
\[
        m(k)=k\sum_{p\leq k}\frac{\log p}{p-1}-\log k!=(1-\gamma)k+o(k).
\]
This cancellation is responsible for the constant $c/(1-\gamma)$ in Theorem \ref{thm:main}.  The key point is that the two large terms $k\log k$ and $\log k!$ cancel, leaving a linear main term.  Once $U_k(n)$ is known to be uniformly close to this mean for all $k$ in a logarithmic window, the first crossing of the level $c\log n$ must occur near $k=c(1-\gamma)^{-1}\log n$.

Two technical points are central.  First, the normal-order theorem requires concentration simultaneously for every integer $k\leq A\log X$ on the same dyadic interval.  We remove, outside a single exceptional set of size $o(X)$, all contributions from prime powers $p^a$ that are too large for a fourth-moment expansion.  After this removal, every least common multiple arising from four residue conditions is $o(X)$, so interval averages may be replaced by complete-residue averages with a summable error.  This is the role of the truncation $p^a\leq X^{1/10}$; the numerical exponent is immaterial, but it must be smaller than $1/4$ because four moduli appear in the moment expansion.

Secondly, the central limit theorem has two different scales.  Higher prime powers contribute a deterministic amount to $m(k)$, and so they cannot simply be discarded before centering.  After centering, however, their total $L^2$ size is $o(\sqrt{k\log k})$.  Thus the variance on the Gaussian scale comes only from the prime levels $p$, where the residue classes for distinct primes factor through the Chinese remainder theorem and lead to an independent Bernoulli model.

The argument never uses monotonicity of $U_k(n)$ in $k$; indeed both the binomial coefficient and the set of permitted primes change with $k$.  The proof of Theorem \ref{thm:main} instead combines a uniform lower exclusion below the proposed crossing point with a single exhibited value of $k$ above it.

Throughout, $F\ll_A G$ means $|F|\leq C_A G$ for a constant depending at most on $A$.  Unless stated otherwise, asymptotic notation in dyadic estimates is as $X\to\infty$, with fixed parameters such as $A$, $c$, and $\eta$ held fixed.  Supremums over ranges of $k$ are over integer values of $k$.

\section{Standard estimates, the carry formula, and the mean}\label{sec:mean}

We shall use the following standard analytic estimates.

\begin{lemma}[Standard estimates]\label{lem:standard_estimates}
Let
\[
        \theta(x):=\sum_{p\leq x}\log p.
\]
As $x\to\infty$,
\[
        \theta(x)=x+o(x)
\]
and
\[
        \sum_{m\leq x}\frac{\von(m)}m=\log x-\gamma+o(1).
\]
Moreover, for each fixed integer $j\geq1$ and all $x\geq2$,
\[
        \sum_{p\leq x}(\log p)^j\ll_j x(\log x)^{j-1},
        \qquad
        \pi(x)\ll \frac{x}{\log(2x)},
        \qquad
        \sum_{p\leq x}p\log p\ll x^2.
\]
\end{lemma}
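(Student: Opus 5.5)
These are classical facts, and I would prove them by reducing each to the prime number theorem, Mertens-type estimates, and Chebyshev bounds, rather than by any new argument. The first assertion, $\theta(x)=x+o(x)$, is equivalent to the prime number theorem, and I would cite it directly (in the form $\psi(x)\sim x$, noting that $\psi(x)-\theta(x)\ll\sqrt{x}\log x$).

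For the second assertion I would write $\sum_{m\le x}\von(m)/m$ via partial summation from $\psi(t)=t+R(t)$, where $R(t)=o(t)$:
\[
\sum_{m\le x}\frac{\von(m)}{m}=\frac{\psi(x)}{x}+\int_1^x\frac{\psi(t)}{t^2}\,dt
=1+o(1)+\log x+\int_1^x\frac{R(t)}{t^2}\,dt .
\]
Here one needs the convergence of $\int_1^\infty R(t)t^{-2}\,dt$, which does not follow from $R(t)=o(t)$ alone. So the honest route is the classical one. I would take Mertens' estimate $\sum_{m\le x}\von(m)/m=\log x+O(1)$, which comes from $\log\lfloor x\rfloor!=\sum_{m\le x}\von(m)\lfloor x/m\rfloor$ together with Stirling. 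Then I would identify the constant $-\gamma$: convergence of the integral is equivalent to the existence of the limit, and this follows from the prime number theorem with error $\psi(t)-t\ll t(\log t)^{-2}$ (de la Vallée Poussin). With the limit in hand, I would evaluate the constant through the Laurent expansion $-\zeta'(s)/\zeta(s)=\frac{1}{s-1}-\gamma+O(s-1)$, using an Abelian/Tauberian comparison of $\sum\von(m)m^{-s}$ at $s\to1^+$. This identification of the constant is the only genuinely delicate step, and I would treat it as standard and cite it (e.g.\ Montgomery--Vaughan, Ch.~6) rather than reprove it.

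For the uniform bounds I would rely on Chebyshev's elementary estimate $\theta(x)\ll x$ for all $x\ge2$. Then:
\begin{itemize}
\item $\sum_{p\le x}(\log p)^j\le(\log x)^{j-1}\theta(x)\ll x(\log x)^{j-1}$.
\item $\pi(x)\le\pi(\sqrt{x})+\frac{2}{\log x}\theta(x)\ll x/\log(2x)$, where the $\log(2x)$ absorbs the small range of $x$.
\item $\sum_{p\le x}p\log p\le x\,\theta(x)\ll x^2$.
\end{itemize}
All of these are immediate once Chebyshev's bound is available, with the implied constants depending only on $j$.
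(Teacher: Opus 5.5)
Your proposal is correct and takes essentially the paper's approach: both reduce the lemma to the prime number theorem, the Mertens--von Mangoldt estimate with its constant term, and Chebyshev's bound $\theta(x)\ll x$. Your extra detail (obtaining the constant $-\gamma$ from the de la Vall\'ee Poussin error term and the Laurent expansion of $-\zeta'/\zeta$, and using the trivial bound $(\log p)^{j-1}\le(\log x)^{j-1}$ where the paper mentions partial summation) is sound, but it elaborates the same argument rather than giving a different route.
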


\begin{proof}
The first estimate is the prime number theorem in Chebyshev's $\theta$-function form.  The second is the Mertens-von Mangoldt estimate with its constant term; see Montgomery and Vaughan \cite[Chs.~1--2]{MV}.  Chebyshev's estimate gives $\pi(x)\ll x/\log(2x)$.  The displayed bound for $\sum_{p\leq x}(\log p)^j$ follows from Chebyshev's estimate and partial summation.  Finally,
\[
        \sum_{p\leq x}p\log p
        \leq x\sum_{p\leq x}\log p
        =x\theta(x)\ll x^2.
\]
\end{proof}

We next translate divisibility of a binomial coefficient into residue inequalities.  This is the form of Kummer's theorem \cite{Kummer1852} needed in the sequel; the proof is included to fix the residue convention.

\begin{lemma}[Kummer's formula in residue form]\label{lem:kummer}
For every prime $p$ and every pair of integers $0\leq k\leq n$,
\[
        \vp_p\binom nk
        =\sum_{a\geq1}\one_{[k]_{p^a}>[n]_{p^a}}.
\]
Consequently,
\[
        U_k(n)=\sum_{p\leq k}\sum_{a\geq1}
        (\log p)\one_{[n]_{p^a}<[k]_{p^a}}.
\]
\end{lemma}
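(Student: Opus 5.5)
The plan is to prove the valuation formula through Legendre's formula, and then obtain the expression for $U_k(n)$ by weighting with $\log p$ and summing over $p\leq k$.

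\textbf{Step 1 (Legendre).} Write $\vp_p(m!)=\sum_{a\geq1}\lfloor m/p^a\rfloor$. Since $\binom nk=n!/(k!\,(n-k)!)$, this gives
\[
\vp_p\binom nk=\sum_{a\geq1}\Bigl(\Bigl\lfloor \frac{n}{p^a}\Bigr\rfloor-\Bigl\lfloor \frac{k}{p^a}\Bigr\rfloor-\Bigl\lfloor \frac{n-k}{p^a}\Bigr\rfloor\Bigr).
\]
Each series has only finitely many nonzero terms, because all terms with $p^a>n$ vanish. Hence the series can be manipulated term by term.

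\textbf{Step 2 (one summand).} Put $q=p^a$. Writing $\lfloor x/q\rfloor=(x-[x]_q)/q$, the $a$-th summand becomes
\[
\frac{[k]_q+[n-k]_q-[n]_q}{q}.
\]
This is an integer. Since $[n-k]_q\equiv[n]_q-[k]_q \pmod q$, we distinguish two cases:
\begin{itemize}
\item If $[n]_q\geq[k]_q$, then $[n-k]_q=[n]_q-[k]_q$, and the summand is $0$.
\item If $[n]_q<[k]_q$, then $[n-k]_q=[n]_q-[k]_q+q$, and the summand is $1$.
\end{itemize}
So the summand equals $\one_{[k]_{q}>[n]_{q}}$. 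Summing over $a\geq1$ gives the first formula.

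\textbf{Step 3.} For the consequence, $\log u(n,k)=\sum_{p\leq k}(\log p)\,\vp_p\binom nk$ by the definition of $u(n,k)$. Substituting the formula from Step 2 gives the stated expression for $U_k(n)$. The double sum is finite, since $\one_{[n]_{p^a}<[k]_{p^a}}=0$ once $p^a>k$ (then $[k]_{p^a}=k\leq n$; and $[n]_{p^a}<k$ is possible, but such $a$ still satisfy $p^a\leq n$ by Step 1). The edge case $k=0$ needs no separate argument: both sides vanish because the sum over $p\leq k$ is empty.

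There is no genuine obstacle here. The only point needing care is the residue bookkeeping in Step 2: one must check that $[n-k]_q$ equals $[n]_q-[k]_q$ or $[n]_q-[k]_q+q$ according to the sign of $[n]_q-[k]_q$. This uses $0\leq[\cdot]_q<q$, and it uses $n-k\geq0$ so that $[n-k]_q$ is the least non-negative residue of a genuine non-negative integer.
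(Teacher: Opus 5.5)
Your proof is correct and follows the same route as the paper: Legendre's formula, then a case check on whether $[k]_q>[n]_q$ to show that each summand is exactly that indicator (the paper does the same bookkeeping by writing $n=qN+r$ and $k=qK+s$). One small slip is in Step 3: the indicator need not vanish once $p^a>k$, as you partly note yourself, and the correct cutoff for finiteness is $p^a>n$, since then $[n]_{p^a}=n\geq k=[k]_{p^a}$.
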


\begin{proof}
The sums are finite for fixed $n$ and $k$: if $p^a>n$, then $[n]_{p^a}=n\geq k=[k]_{p^a}$, so the corresponding indicator is zero.

Legendre's formula gives
\[
\vp_p\binom nk
=\sum_{a\geq1}\left(\left\lfloor\frac n{p^a}\right\rfloor
-\left\lfloor\frac k{p^a}\right\rfloor
-\left\lfloor\frac{n-k}{p^a}\right\rfloor\right).
\]
Fix $q=p^a$ and write $n=qN+r$ and $k=qK+s$, with $0\leq r,s<q$.  Then $n-k=q(N-K)+(r-s)$.  If $s\leq r$, then $0\leq r-s<q$, and the summand above is $N-K-(N-K)=0$.  If $s>r$, then $-q<r-s<0$, and the summand is $N-K-(N-K-1)=1$.  Therefore the summand is exactly $\one_{s>r}=\one_{[k]_q>[n]_q}$.  Multiplying by $\log p$ and summing over $p\leq k$ proves the displayed formula for $U_k(n)$.
\end{proof}

For a complete set of residues modulo $p^a$, the average of $\one_{[n]_{p^a}<[k]_{p^a}}$ is $[k]_{p^a}/p^a$.  The next lemma computes the main term of this average.

\begin{lemma}\label{lem:mean_arithmetic}
As $k\to\infty$,
\[
        m(k)=(1-\gamma)k+o(k).
\]
Moreover, for every fixed $A>0$,
\[
        \sup_{1\leq k\leq A L}|m(k)-(1-\gamma)k|=o_A(L)
        \qquad (L\to\infty).
\]
\end{lemma}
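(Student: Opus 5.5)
The plan is to establish the exact identity
\[
        m(k)=k\sum_{p\leq k}\frac{\log p}{p-1}-\log k!
\]
and then evaluate both terms asymptotically.

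For the identity, I write $[k]_{p^a}=k-p^a\lfloor k/p^a\rfloor$. Summing $[k]_{p^a}/p^a$ over $a\geq1$ gives two series. The first is
\[
        k\sum_{a\geq1}p^{-a}=\frac{k}{p-1}.
\]
The second is
\[
        \sum_{a\geq1}\lfloor k/p^a\rfloor=\vp_p(k!),
\]
by Legendre's formula; it is a finite sum since the terms vanish for $p^a>k$. Multiplying by $\log p$ and summing over $p\leq k$ gives
\[
        m(k)=k\sum_{p\leq k}\frac{\log p}{p-1}-\sum_{p\leq k}\vp_p(k!)\log p .
\]
The last sum equals $\log k!$, because every prime dividing $k!$ is at most $k$.

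For the asymptotics I use Lemma~\ref{lem:standard_estimates} and relate the prime sum to the von Mangoldt sum. Expanding $\frac{\log p}{p-1}=\sum_{a\geq1}\frac{\log p}{p^a}$ gives
\[
        \sum_{p\leq k}\frac{\log p}{p-1}=\sum_{p\leq k}\sum_{a\geq1}\frac{\log p}{p^a}.
\]
The right side differs from $\sum_{m\leq k}\von(m)/m$ only in the terms with $p\leq k<p^a$ (and $a\geq2$). Those terms are bounded by
\[
        \sum_{p\leq k}\log p\sum_{a:\,p^a>k}p^{-a}\leq\sum_{p\leq k}\frac{\log p}{k}\cdot\frac{p}{p-1}\ll\frac{\theta(k)}{k}.
\]
This is only $O(1)$, which is not good enough, so I refine the estimate. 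The sum over $a\geq 2$ with $p^a>k$ is at most $\sum_{p^2>k}\frac{2\log p}{p^2}$ plus the contribution of $p\leq\sqrt k$. The first part is $\ll\sum_{p>\sqrt k}\log p/p^2=o(1)$. For $p\leq\sqrt k$, the terms with $p^a>k$ sum to at most $2\log p/k$, so their total is $\ll\theta(\sqrt k)/k=o(1)$. Hence
\[
        \sum_{p\leq k}\frac{\log p}{p-1}=\log k-\gamma+o(1),
\]
using the Mertens--von Mangoldt estimate. Stirling's formula, or simply $\log k!=k\log k-k+O(\log k)$ by comparison with $\int_1^k\log t\,dt$, gives
\[
        m(k)=k\log k-\gamma k+o(k)-k\log k+k+O(\log k)=(1-\gamma)k+o(k).
\]

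For the uniform statement, fix $\varepsilon>0$ and choose $K_0$ with $|m(k)-(1-\gamma)k|\leq\varepsilon k$ for all $k\geq K_0$. Then for $1\leq k\leq AL$ we have
\[
        |m(k)-(1-\gamma)k|\leq\varepsilon AL+\max_{k<K_0}|m(k)-(1-\gamma)k|,
\]
and the second term is a constant independent of $L$, hence $o(L)$. Since $\varepsilon$ is arbitrary, this proves the second claim.

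The only delicate step is controlling the discrepancy between the truncated prime-power sum and the von Mangoldt sum up to $k$, so that the error is $o(1)$ rather than $O(1)$. The split at $p=\sqrt k$ handles this; everything else is bookkeeping.
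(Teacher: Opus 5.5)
Your proposal is correct and follows the paper's proof essentially step for step. Both arguments derive $m(k)=k\sum_{p\leq k}\frac{\log p}{p-1}-\log k!$ from $[k]_q=k-q\lfloor k/q\rfloor$ and Legendre's formula, split at $p=\sqrt{k}$ to show the levels $p^a>k$ contribute $o(1)$, then apply the Mertens--von Mangoldt estimate with Stirling's formula, and use the same choice of $K_0$ for the uniform statement. The only differences are cosmetic (your $\varepsilon AL$ versus the paper's normalization by $A+1$, and your sharper $O(\log k)$ error for $\log k!$).
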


\begin{proof}
For every positive integer $q$,
\[
        [k]_q=k-q\left\lfloor\frac{k}{q}\right\rfloor,
        \qquad
        \frac{[k]_q}{q}=\frac{k}{q}-\left\lfloor\frac{k}{q}\right\rfloor.
\]
Using this with $q=p^a$ gives
\begin{align*}
        m(k)
        &=k\sum_{p\leq k}\log p\sum_{a\geq1}\frac1{p^a}
          -\sum_{p\leq k}\log p\sum_{a\geq1}\left\lfloor\frac{k}{p^a}\right\rfloor  \\
        &=k\sum_{p\leq k}\frac{\log p}{p-1}-\log k!.
\end{align*}
The last identity is Legendre's formula for the exponent of $p$ in $k!$, summed with weight $\log p$.

It remains to estimate the prime sum.  We have
\[
        \sum_{p\leq k}\frac{\log p}{p-1}
        =\sum_{p\leq k}\sum_{a\geq1}\frac{\log p}{p^a}.
\]
The part of this double sum with $p^a>k$ is $o(1)$.  Indeed, if $p\leq k^{1/2}$ and $b$ is the largest integer with $p^b\leq k$, then
\[
        \sum_{a>b}\frac{\log p}{p^a}\ll \frac{\log p}{k},
\]
because $p^{b+1}>k$.  Summing over $p\leq k^{1/2}$ gives $O(k^{-1}\theta(k^{1/2}))=O(k^{-1/2})$.  If $k^{1/2}<p\leq k$, the omitted terms have $a\geq2$ and contribute
\[
        \ll \sum_{k^{1/2}<p\leq k}\frac{\log p}{p^2}
        \leq \sum_{k^{1/2}<m\leq k}\frac{\log k}{m^2}
        \ll \frac{\log k}{k^{1/2}}.
\]
Therefore, by Lemma \ref{lem:standard_estimates},
\[
        \sum_{p\leq k}\frac{\log p}{p-1}
        =\sum_{p^a\leq k}\frac{\log p}{p^a}+o(1)
        =\sum_{m\leq k}\frac{\von(m)}m+o(1)
        =\log k-\gamma+o(1).
\]
Stirling's formula gives $\log k!=k\log k-k+o(k)$, and hence
\[
        m(k)=k(\log k-\gamma+o(1))-(k\log k-k+o(k))=(1-\gamma)k+o(k).
\]

For the uniform statement, fix $\delta>0$ and choose $K_0$ so that
\[
        |m(k)-(1-\gamma)k|\leq \frac{\delta}{A+1}k
        \qquad(k\geq K_0).
\]
Then the error is at most $\delta L$ for $K_0\leq k\leq A L$.  The finitely many values $1\leq k<K_0$ contribute $O_{K_0}(1)=o(L)$.  Since $\delta$ is arbitrary, the supremum is $o_A(L)$.
\end{proof}

\section{Removing large prime powers}\label{sec:tail-removal}

The concentration argument will use complete-residue averages for products of four residue functions.  To make the periods of those products small compared with the interval length, we first discard prime-power levels above a fixed power of $X$.

The discarded levels cannot be ignored for every integer $n$.  If $q=p^a$ is large while $k$ is logarithmic, then the event $[n]_q<[k]_q$ says that $n$ lies very close to a multiple of $q$.  Equivalently, $q$ divides one of $n,n-1,\ldots,n-k+1$.  The following lemma shows that, after taking a union over all small bases $p$ and all logarithmic shifts $b$, the set of $n$ for which this happens has density tending to zero.

Fix $A>0$.  For a dyadic parameter $X\geq3$, put
\[
        L:=\log X,
        \qquad
        Q_0:=X^{1/10}.
\]
For $1\leq k\leq A L$ define the truncated carry sum
\[
        U_k^{\leq Q_0}(n):=
        \sum_{\substack{p\leq k,\ a\geq1\\ p^a\leq Q_0}}
        (\log p)\one_{[n]_{p^a}<[k]_{p^a}}.
\]
The exponent $1/10$ is not essential.  Any fixed exponent $\vartheta<1/4$ would suffice for the fourth-moment argument, because a least common multiple of four truncated moduli would then be at most $X^{4\vartheta}=o(X)$.

\begin{lemma}\label{lem:tail}
For each fixed $A>0$, for all but $o_A(X)$ integers $n\in[X,2X)$, one has
\[
        U_k(n)=U_k^{\leq Q_0}(n)
\]
simultaneously for every integer $1\leq k\leq A\log X$.
\end{lemma}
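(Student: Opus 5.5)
The plan is to bound the set of bad $n$ by a union of congruence conditions and count it. Note first that $U_k(n)-U_k^{\leq Q_0}(n)$ is a sum of non-negative terms over pairs $(p,a)$ with $p\leq k$ and $p^a>Q_0$. So equality fails for some $k\leq A\log X$ only if there are a prime $p\leq AL$, an exponent $a$ with $q=p^a>Q_0$, and an integer $k\leq AL$ with $[n]_q<[k]_q$. Since $[k]_q\leq k$, this forces $[n]_q=b$ for some $0\leq b<AL$, that is, $q\mid n-b$. Moreover $n\geq X>k$, so Lemma \ref{lem:kummer} applies to every relevant pair $(n,k)$.

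For each $p\leq AL$, let $a_p$ be the least exponent with $p^{a_p}>Q_0$. Any $q=p^a>Q_0$ is divisible by $q_p:=p^{a_p}$, so it suffices to count those $n\in[X,2X)$ with $q_p\mid n-b$ for some $p\leq AL$ and some $0\leq b<AL$. For fixed $(p,b)$ the number of such $n$ is at most $X/q_p+1\leq X/Q_0+1=X^{9/10}+1$. Summing over at most $AL$ primes and $AL$ shifts gives a total of
\[
        \ll A^2L^2\bigl(X^{9/10}+1\bigr)=o_A(X).
\]
We also need $q_p\leq 2X$ so that the $+1$ term is harmless. This holds because $q_p\leq pQ_0\leq ALX^{1/10}$; in any case the $+1$ only contributes $O(L^2)$.

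There is no genuine obstacle. The one point needing care is that a single exceptional set must work uniformly for all $k\leq AL$. This is handled by taking the union over shifts $b<AL$, which covers every $k$ at once, and by reducing all higher powers $p^a>Q_0$ to the single minimal power $q_p$. This reduction avoids summing over $a$, although a sum over $a$ would also converge. Off the exceptional set, every term with $p^a>Q_0$ and $p\leq k\leq AL$ has $[n]_{p^a}\geq AL>[k]_{p^a}$ and so vanishes, which gives $U_k(n)=U_k^{\leq Q_0}(n)$ for all such $k$ simultaneously.
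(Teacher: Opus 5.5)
Your proof is correct and follows the paper's approach: define the exceptional set by $p^a\mid n-b$ for primes $p\leq AL$, prime powers $p^a>Q_0$ and shifts $0\leq b<AL$, count it with a union bound, and note that off this set every level $p^a>Q_0$ has a vanishing indicator. Your reduction to the single minimal power $q_p=p^{a_p}>Q_0$ is a small streamlining that removes the paper's sum over $a$ and its cap $p^a\leq 2X$. One minor slip: the last inequality should read $[n]_{p^a}\geq AL\geq [k]_{p^a}$ rather than strictly, which is all you need.
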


\begin{proof}
Assume $X$ is large enough that $Q_0>A L$.  Let $\mathcal E_A(X)$ be the set of integers $n\in[X,2X)$ for which there exist a prime $p\leq A L$, an integer $a\geq1$, and an integer $b$ with $0\leq b\leq A L$ such that
\[
        Q_0<p^a\leq 2X,
        \qquad
        p^a\mid n-b.
\]
We first show that $\#\mathcal E_A(X)=o_A(X)$.

For fixed $q=p^a$ and $b$, the number of $n\in[X,2X)$ satisfying $q\mid n-b$ is $O(X/q+1)$.  There are $O_A(L)$ possible integers $b$.  Therefore
\[
\#\mathcal E_A(X)
\ll_A L\sum_{p\leq A L}\sum_{\substack{a\geq1\\ Q_0<p^a\leq 2X}}
        \left(\frac X{p^a}+1\right).
\]
After division by $X$, this is at most
\[
L\sum_{p\leq A L}\sum_{p^a>Q_0}\frac1{p^a}
+\frac{L}{X}\sum_{p\leq A L}\#\{a\geq1:p^a\leq2X\}.
\]
For each fixed prime $p\leq A L<Q_0$, the geometric tail satisfies
\[
        \sum_{p^a>Q_0}p^{-a}\ll Q_0^{-1}
\]
uniformly in $p$.  Hence the first term is $O_A(L^2/Q_0)=o_A(1)$.  Also
\[
        \#\{a\geq1:p^a\leq2X\}\leq \frac{\log(2X)}{\log 2}=O(L),
\]
so the second term is $O_A(L^3/X)=o_A(1)$.  Thus $\#\mathcal E_A(X)=o_A(X)$.

Now suppose that $n\notin\mathcal E_A(X)$ and that a level $q=p^a>Q_0$ contributes to $U_k(n)$ for some integer $k\leq A L$.  Since $q>Q_0>A L\geq k$, we have $[k]_q=k$.  The contribution condition is then $[n]_q<k\leq A L$.  Put $b=[n]_q$.  If $q>2X$, then $[n]_q=n\geq X>k$, so no contribution is possible.  Hence any contributing $q$ satisfies $q\leq2X$, and $q\mid n-b$ with $0\leq b\leq A L$.  This places $n$ in $\mathcal E_A(X)$, a contradiction.  Therefore no level $q>Q_0$ contributes for any $k\leq A L$ outside $\mathcal E_A(X)$.
\end{proof}

\section{Periodic averages and one-prime tower moments}\label{sec:periodic-tower}

We shall repeatedly replace interval averages by complete-residue averages.  The following elementary estimate is the only input needed for this replacement.

\begin{lemma}[Periodic averaging]\label{lem:periodic_average}
If $F$ is periodic modulo $Q$ and $|F|\leq1$, then
\[
        \EE_X F(n)
        =\frac1Q\sum_{n\bmod Q}F(n)+O\left(\frac QX+\frac1X\right),
\]
uniformly for $Q\geq1$ and $X\geq1$.  In particular, when $Q\leq X$ the error is $O(Q/X)$.
\end{lemma}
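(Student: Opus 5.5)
The plan is to split $\II_X$ into complete blocks of $Q$ consecutive integers plus a short remainder.  Let $N:=\#\II_X$; since $\II_X=\{n\in\ZZ:X\leq n<2X\}$, we have $N=X+O(1)$ and $N\geq X-1$.  Write $N=MQ+r$ with $M=\lfloor N/Q\rfloor$ and $0\leq r<Q$.  Then $\II_X$ is the disjoint union of $M$ runs of $Q$ consecutive integers and one leftover run of length $r$.

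Each run of $Q$ consecutive integers is a complete residue system modulo $Q$.  Since $F$ is periodic modulo $Q$, the sum of $F$ over each such run equals $S:=\sum_{n\bmod Q}F(n)$.  The leftover run has at most $Q$ terms, each of absolute value at most $1$, so
\[
        \sum_{n\in\II_X}F(n)=MS+O(Q).
\]
Dividing by $N$ and using $MQ=N-r$ gives
\[
        \EE_X F(n)=\frac{M Q}{N}\cdot\frac{S}{Q}+O\left(\frac QN\right)
        =\frac SQ-\frac{r}{N}\cdot\frac SQ+O\left(\frac QN\right).
\]
Here $|S/Q|\leq1$ and $r<Q$, so the middle term is also $O(Q/N)$.

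It remains to replace $N$ by $X$.  If $X$ is large, then $N\asymp X$, and the error is $O(Q/X)$.  For small $X$ (say $1\leq X<2$), $N$ is at least $1$, and trivially both sides are bounded by $1$ in absolute value.  The difference is therefore at most $2\ll Q/X+1/X$, since $X<2$.  The $1/X$ term absorbs this case and the rounding in $N=X+O(1)$.  When $Q\leq X$ we have $1/X\leq Q/X$, giving the final remark.

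The only point needing care is this bookkeeping of $N$ against $X$ for small $X$.  There is no genuine obstacle here; the argument is elementary.
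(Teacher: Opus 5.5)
Your proof is correct and follows the paper's argument: split $\II_X$ into complete blocks of $Q$ consecutive integers, each a complete residue system modulo $Q$, and bound the incomplete remainder trivially using $|F|\leq1$. The differences are cosmetic. You align blocks from the left endpoint, giving one leftover run instead of the paper's two end pieces. You also track the normalization $MQ/N$ and the comparison $N\asymp X$ for small $X$ explicitly, where the paper leaves these implicit.
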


\begin{proof}
Let $M_X:=\#\II_X$.  Then $M_X=X+O(1)$.  Decompose $\II_X$ into complete consecutive blocks of $Q$ integers, together with at most two incomplete end blocks.  On each complete block the average of $F$ is exactly $Q^{-1}\sum_{n\bmod Q}F(n)$.  The incomplete end blocks contain $O(Q+1)$ integers altogether.  Since $|F|\leq1$, their contribution to the normalized average is $O((Q+1)/M_X)$, which is $O(Q/X+1/X)$.
\end{proof}

The next elementary refinement is used only for the very sparse tail in the proof of Lemma \ref{lem:higher_powers}.  It avoids losing a factor $q$ from an incomplete block when the permitted residue classes form a short initial interval.

\begin{lemma}[Short residue windows]\label{lem:short_window}
Let $q\geq1$ and $1\leq h\leq q$ be integers.  Then
\[
        \PP_X([n]_q<h)\ll \frac{h}{q}+\frac{h}{X},
\]
uniformly in $q$, $h$, and $X\geq1$.
\end{lemma}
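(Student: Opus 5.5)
The plan is to count directly.  The event $[n]_q<h$ says that $n$ lies in one of the residue classes $0,1,\dots,h-1$ modulo $q$.  Equivalently, $n$ lies in a union of short windows $[jq,\,jq+h)$ of length $h$, one for each integer $j$.  So it is enough to bound how many such windows meet $\II_X$ and how many integers each one contributes.

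First I write $M_X:=\#\II_X$, which satisfies $M_X=X+O(1)$ and $M_X\geq1$ for $X\geq1$.  Every $n\in\II_X$ with $[n]_q<h$ has the form $n=jq+r$ with $0\leq r<h$ and $j=\lfloor n/q\rfloor$.  Each window contains at most $h$ integers, so the count is at most $h$ times the number of indices $j$ whose window meets $[X,2X)$.  Since $jq\leq n<2X$ and $jq+h-1\geq n\geq X$, the starting points $jq$ lie in the interval $[X-h+1,\,2X)$.  This interval has length at most $X+h$, so the number of admissible $j$ is at most $(X+h)/q+1$.

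This gives
\[
        \#\{n\in\II_X:[n]_q<h\}\leq h\left(\frac{X+h}{q}+1\right)\leq \frac{hX}{q}+2h,
\]
where the last step uses $h\leq q$, so that $h^2/q\leq h$.  Dividing by $M_X\gg X$ yields
\[
        \PP_X([n]_q<h)\ll \frac hq+\frac hX,
\]
with an absolute implied constant, uniformly in $q$, $h$ and $X\geq1$.

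The only point needing care is to charge the incomplete end windows $h$ each, rather than $q$ as in Lemma~\ref{lem:periodic_average}.  The argument above does this automatically, because every window, complete or not, contains at most $h$ integers.  For small $X$, when $M_X$ is not close to $X$, one still has $M_X\geq \max(1,X/2)$ for $X\geq1$, so the normalization causes no loss.  I do not expect any genuine obstacle.
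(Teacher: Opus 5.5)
Your proof is correct and is essentially the paper's argument. The paper counts $X/q+O(1)$ integers in each of the $h$ residue classes $0\leq r<h$, while you count at most $h$ integers in each of the $X/q+O(1)$ windows $[jq,jq+h)$; this is the same double count read in the other order. Your remark that $M_X\geq\max(1,X/2)$ for $X\geq1$ also makes the division step explicit for small $X$, which the paper's ``$\#\II_X=X+O(1)$'' leaves implicit.
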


\begin{proof}
For each residue $0\leq r<h$, the number of integers $n\in[X,2X)$ with $n\equiv r\pmod q$ is $X/q+O(1)$.  Summing this estimate over $0\leq r<h$ gives $O(hX/q+h)$ admissible integers.  Division by $\#\II_X=X+O(1)$ gives the claimed $O(h/q+h/X)$ bound, after changing the absolute constant if necessary.  The gain over the crude periodic estimate $O(h/q+q/X)$ is that the incomplete end blocks contain only the $h$ relevant residue classes, not all $q$ possible classes.
\end{proof}

\begin{lemma}[Chinese-remainder factorization]\label{lem:crt_factorization}
Let $\mathcal P$ be a finite set of primes.  For each $p\in\mathcal P$, let $b_p\geq1$ and let $F_p$ be a function on residues modulo $p^{b_p}$.  Put $Q:=\prod_{p\in\mathcal P}p^{b_p}$.  Then
\[
        \frac1Q\sum_{n\bmod Q}\prod_{p\in\mathcal P}F_p(n\bmod p^{b_p})
        =\prod_{p\in\mathcal P}\left(
        \frac1{p^{b_p}}\sum_{x\bmod p^{b_p}}F_p(x)\right).
\]
The same conclusion holds when $F_p$ depends only on residues modulo lower powers of $p$.
\end{lemma}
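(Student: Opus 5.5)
The identity is the Chinese remainder theorem combined with the fact that each $F_p$ depends only on the $p$-component of $n$. The natural map
\[
        \ZZ/Q\ZZ\longrightarrow\prod_{p\in\mathcal P}\ZZ/p^{b_p}\ZZ,\qquad n\mapsto (n\bmod p^{b_p})_{p\in\mathcal P},
\]
is a bijection. The moduli $p^{b_p}$ are powers of distinct primes, so they are pairwise coprime, and both sides have cardinality $Q$.

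First I will reindex the sum over $n\bmod Q$ through this bijection. This turns it into a sum over tuples $(x_p)_{p\in\mathcal P}$ with $x_p$ ranging over residues modulo $p^{b_p}$. The summand is then $\prod_p F_p(x_p)$, a product of functions of separate variables.

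Next I will apply distributivity to factor the multiple sum as $\prod_p\sum_{x_p\bmod p^{b_p}}F_p(x_p)$. Dividing by $Q=\prod_p p^{b_p}$ factor by factor gives exactly the right-hand side. (If $\mathcal P$ is empty, both sides equal $1$ under the empty-product convention.)

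For the final sentence, suppose $F_p$ depends only on residues modulo $p^{c_p}$ with $1\le c_p\le b_p$. I will regard $F_p$ as a function modulo $p^{b_p}$ by composing with reduction $\ZZ/p^{b_p}\ZZ\to\ZZ/p^{c_p}\ZZ$ and apply the identity just proved. It remains to check that averages are unchanged under this lifting. Each residue modulo $p^{c_p}$ has exactly $p^{b_p-c_p}$ lifts modulo $p^{b_p}$, so
\[
        \frac1{p^{b_p}}\sum_{x\bmod p^{b_p}}F_p(x)=\frac1{p^{c_p}}\sum_{y\bmod p^{c_p}}F_p(y).
\]
Thus the right-hand side may be computed at either level.

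There is no genuine obstacle here. The only point needing care is stating the CRT bijection correctly, so that the reindexing of the sum is justified.
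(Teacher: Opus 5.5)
Your proposal is correct and follows the paper's proof: both use the Chinese remainder theorem to identify residues modulo $Q$ with tuples of residues modulo $p^{b_p}$, and then factor the average into a product over primes. Your explicit lifting count, in which each residue modulo $p^{c_p}$ has $p^{b_p-c_p}$ preimages, is a detail the paper leaves implicit for the lower-power case, and you handle it correctly.
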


\begin{proof}
The Chinese remainder theorem identifies residues modulo $Q$ bijectively with tuples of residues modulo $p^{b_p}$ for $p\in\mathcal P$.  Under this identification, the average over $n\bmod Q$ becomes the product average over the prime-power components.
\end{proof}

For a modulus $q$ and an integer $k$, put
\[
        r_q(k):=[k]_q,
        \qquad
        W_q(n;k):=\one_{[n]_q<r_q(k)}-\frac{r_q(k)}q.
\]
Then $W_q(\cdot;k)$ is periodic modulo $q$, is bounded in absolute value by $1$, and has mean zero over a complete set of residues modulo $q$.

The next lemma controls the cumulative contribution of all powers of one fixed prime.  It is important that the contribution from levels $p^a>k$ has bounded moments: these events are nested, not independent.

\begin{lemma}[Moments in one prime tower]\label{lem:tower_moments}
Let $p\leq k$ be prime, let $b\geq1$, and let $N_p$ be uniform modulo $p^b$.  Define
\[
        Y_p:=\sum_{1\leq a\leq b}\left(\one_{N_p\bmod p^a<[k]_{p^a}}-\frac{[k]_{p^a}}{p^a}\right),
\]
and put
\[
        \alpha_p^*:=\left\lfloor\log_p k\right\rfloor.
\]
For $j=2,4$,
\[
        \EE |Y_p|^j\ll_j (\alpha_p^*+1)^j,
\]
with absolute implied constants.
\end{lemma}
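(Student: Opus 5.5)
Split $Y_p$ into the low levels $a\leq\alpha_p^*$, where $p^a\leq k$, and the high levels $a>\alpha_p^*$, where $p^a>k$. Each summand is bounded by $1$ in absolute value, so the low part $Y_p^{\mathrm{low}}$ satisfies $|Y_p^{\mathrm{low}}|\leq\alpha_p^*$ deterministically. Its $j$-th moment is therefore at most $(\alpha_p^*)^j$. By Minkowski's inequality, or by $|x+y|^j\leq 2^{j-1}(|x|^j+|y|^j)$, it then suffices to show that the high part has bounded moments:
\[
\EE|Y_p^{\mathrm{high}}|^j\ll_j 1 .
\]
Together with the low-part bound, this gives $\EE|Y_p|^j\ll_j(\alpha_p^*+1)^j$ with an absolute constant for each $j\in\{2,4\}$.

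For the high levels, the point is that $[k]_{p^a}=k$ whenever $p^a>k$. Write $a_0:=\alpha_p^*+1$, the first high level. The high part splits as
\[
Y_p^{\mathrm{high}}=\sum_{a_0\leq a\leq b}\one_{N_p\bmod p^a<k}-\sum_{a_0\leq a\leq b}\frac{k}{p^a}.
\]
The deterministic sum is a geometric tail, so it is at most $k\,p^{-a_0}\cdot p/(p-1)\leq 2$. This uses $p^{a_0}>k$. It remains to bound the moments of the nonnegative integer random variable
\[
T:=\#\{a_0\leq a\leq b:\ N_p\bmod p^a<k\}.
\]

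The key observation is nestedness. If $N_p\bmod p^{a+1}<k$, then $N_p\bmod p^{a+1}<p^a$, because $k<p^{a_0}\leq p^a$. In that case $N_p\bmod p^a=N_p\bmod p^{a+1}<k$. So the events $\{N_p\bmod p^a<k\}$ decrease in $a$, and $T\geq t$ holds exactly when the event at level $a_0+t-1$ occurs. Since $N_p\bmod p^a$ is uniform modulo $p^a$ for $a\leq b$, this gives
\[
\PP(T\geq t)=\frac{k}{p^{a_0+t-1}}\leq p^{-(t-1)}\leq 2^{-(t-1)}.
\]
Hence $\EE T^j=\sum_{t\geq1}(t^j-(t-1)^j)\PP(T\geq t)\leq\sum_t j\,t^{j-1}2^{-(t-1)}\ll_j 1$.

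Combining the pieces, $|Y_p^{\mathrm{high}}|\leq T+2$, so its $j$-th moments are bounded. This finishes the argument. The only step needing care is the nestedness. It is exactly the high levels where the events are not independent, and the tail decay comes from their nested structure, not from any cancellation. The edge case $b<a_0$, where the high part is empty, is trivial.
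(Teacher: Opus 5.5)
Your proof is correct and follows the paper's argument essentially step for step. You use the same low/high split at $\alpha_p^*$, the nestedness of the events $\{N_p\bmod p^a<k\}$ for $p^a>k$ to get $\PP(T\geq t)\leq p^{1-t}$, the tail-sum formula for $\EE T^j$, and a geometric bound on the deterministic high-level sum; the only nit is that $\PP(T\geq t)=k/p^{a_0+t-1}$ should be an inequality, since the probability is $0$ once $a_0+t-1>b$.
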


\begin{proof}
There are at most $\alpha_p^*$ indices $a$ with $p^a\leq k$.  The sum of the corresponding centered indicators is therefore bounded in absolute value by $\alpha_p^*$.

For $a>\alpha_p^*$, one has $p^a>k$, and so $[k]_{p^a}=k$.  Let
\[
        E_a:=\{N_p\bmod p^a<k\},
        \qquad
        T_p:=\sum_{\alpha_p^*<a\leq b}\one_{E_a}.
\]
The events $E_a$ are nested decreasing for $a\geq\alpha_p^*+1$.  Indeed, if $a\geq\alpha_p^*+2$ and $E_a$ occurs, then
\[
        N_p\bmod p^a<k<p^{a-1},
\]
so reducing modulo $p^{a-1}$ leaves the same residue and gives $E_{a-1}$.  Hence
\[
        E_{\alpha_p^*+1}\supseteq E_{\alpha_p^*+2}\supseteq\cdots.
\]
It follows that, for $r\geq1$, the event $T_p\geq r$ implies $E_{\alpha_p^*+r}$ whenever $\alpha_p^*+r\leq b$, and is empty otherwise.  Therefore
\[
        \PP(T_p\geq r)\leq \frac{k}{p^{\alpha_p^*+r}}\leq p^{1-r}.
\]
The tail formula for moments of a non-negative integer-valued random variable gives, for $j=2,4$,
\[
        \EE T_p^j\ll_j \sum_{r\geq1}r^{j-1}\PP(T_p\geq r)
        \ll_j \sum_{r\geq1}r^{j-1}p^{1-r}
        \ll_j 1.
\]
The expectation of the high-level uncentered sum is also bounded:
\[
        \sum_{a>\alpha_p^*}\frac{k}{p^a}
        \leq \frac{k}{p^{\alpha_p^*+1}}\frac1{1-1/p}\ll 1.
\]
Therefore, by $|T_p-\EE T_p|^j\ll_j T_p^j+(\EE T_p)^j$, the centered high-level contribution has bounded second and fourth moments.  Combining this with the deterministic bound for the levels $p^a\leq k$ proves the lemma.
\end{proof}

\section{A fourth moment and uniform concentration}\label{sec:fourth}

For $Q_0=X^{1/10}$, define the truncated complete-residue mean
\[
        m_{Q_0}(k):=
        \sum_{\substack{p\leq k,\ a\geq1\\ p^a\leq Q_0}}
        (\log p)\frac{[k]_{p^a}}{p^a}.
\]

\begin{lemma}[Fourth moment for the truncated sum]\label{lem:fourth}
For each fixed $A>0$, uniformly for integers $1\leq k\leq A\log X$,
\[
\EE_X\left|
U_k^{\leq Q_0}(n)-m_{Q_0}(k)
\right|^4
        \ll_A k^2(\log(2k))^2+1.
\]
\end{lemma}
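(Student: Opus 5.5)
Write $U_k^{\leq Q_0}(n)-m_{Q_0}(k)=\sum_{p\leq k}(\log p)Y_p(n)$. Here
\[
Y_p(n):=\sum_{a\geq1,\ p^a\leq Q_0}W_{p^a}(n;k)
\]
is the centered tower for the prime $p$, and each $Y_p$ is periodic modulo $p^{b_p}$, where $b_p$ is the largest exponent with $p^{b_p}\leq Q_0$. The plan is to expand the fourth power and compute each term in the complete-residue model. The fourth power is
\[
\sum_{p_1,p_2,p_3,p_4\leq k}\prod_{i=1}^4\log p_i\;\EE_X\bigl[Y_{p_1}Y_{p_2}Y_{p_3}Y_{p_4}\bigr].
\]
Each product is periodic modulo $\lcm(p_i^{b_{p_i}})\leq Q_0^4=X^{2/5}$. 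It is not bounded by $1$, but it is bounded by $\prod_i b_{p_i}\ll L^4$. So Lemma \ref{lem:periodic_average}, applied after rescaling, replaces $\EE_X$ by the complete-residue average with error $O(L^4X^{2/5}/X)$. The number of quadruples is at most $\pi(AL)^4$, and the weights contribute at most $(\log L)^4$. The total replacement error is therefore $O_A(L^{12}X^{-3/5})=o(1)$, which is absorbed into the $+1$.

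In the complete-residue model, Lemma \ref{lem:crt_factorization} factors the average over distinct primes. The towers $Y_p$ for distinct $p$ are then independent random variables with mean zero. Hence only two kinds of index patterns survive: all four primes equal, or two distinct pairs. The model fourth moment is therefore at most
\[
\sum_{p\leq k}(\log p)^4\,\EE Y_p^4+3\Bigl(\sum_{p\leq k}(\log p)^2\,\EE Y_p^2\Bigr)^2 .
\]
I would then invoke Lemma \ref{lem:tower_moments} with $b=b_p$. The lemma as stated sums over $1\leq a\leq b$ with $N_p$ uniform modulo $p^b$, which is exactly our truncated tower. It gives
\[
\EE Y_p^j\ll(\alpha_p^*+1)^j,\qquad \alpha_p^*=\lfloor\log k/\log p\rfloor .
\]
The main work is bookkeeping these sums. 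For $p>\sqrt k$ we have $\alpha_p^*=1$. These primes contribute $\sum_{p\leq k}(\log p)^2\ll k\log k$ to the second-moment sum and $\sum_{p\leq k}(\log p)^4\ll k(\log k)^3$ to the diagonal fourth-moment sum, both by Lemma \ref{lem:standard_estimates}. For $p\leq\sqrt k$ we have $(\log p)(\alpha_p^*+1)\ll\log k$, so these primes contribute at most $\pi(\sqrt k)(\log k)^2\ll\sqrt k\log k$ and $\pi(\sqrt k)(\log k)^4$, which are negligible. Squaring the second-moment sum gives $k^2(\log k)^2$. The diagonal term $k(\log k)^3$ is smaller than this. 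Combined with the $+1$ for small $k$, this yields the claimed bound $\ll_A k^2(\log 2k)^2+1$.

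The step I expect to need most care is the uniform replacement of interval averages by complete-residue averages, because the summands are unbounded towers rather than single indicators. I would handle it as above: either bound $|Y_p|\leq b_p\ll L$ crudely, or expand $Y_p$ back into its at most $O(L)$ indicators and apply Lemma \ref{lem:periodic_average} to each product of four $W$'s. In either version the saving $X^{-3/5}$ comfortably beats all polylogarithmic losses. A minor point is that Lemma \ref{lem:crt_factorization} requires combining levels of the same prime under the single modulus $p^{b_p}$. This is automatic, since each $Y_p$ is already a function modulo $p^{b_p}$.
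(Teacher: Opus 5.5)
Your proposal is correct and is essentially the paper's proof. You replace interval averages by complete-residue averages using periods at most $Q_0^4=X^{2/5}$, factor via the Chinese remainder theorem into independent mean-zero prime towers, bound their second and fourth moments with Lemma \ref{lem:tower_moments}, and finish with the fourth-moment identity for independent sums. The only cosmetic difference is that you group each prime's levels into $Y_p$ before the interval-to-residue replacement, paying a harmless factor $\prod b_{p_i}\ll L^4$, whereas the paper expands over prime-power quadruples whose factors are bounded by $1$; both routes give an $o(1)$ error.
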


\begin{proof}
Write
\[
        S_k(n):=U_k^{\leq Q_0}(n)-m_{Q_0}(k)
        =\sum_{\substack{p\leq k,\ a\geq1\\p^a\leq Q_0}}
        (\log p)W_{p^a}(n;k).
\]
Expanding $S_k(n)^4$ gives a sum over quadruples $q_i=p_i^{a_i}\leq Q_0$, with $p_i\leq k$.  We first replace the average over the interval $[X,2X)$ by the average over a complete system of residues modulo the least common multiple of the four moduli.  This reduction is harmless precisely because the truncation makes every such least common multiple much smaller than $X$.  For such a quadruple set $Q(\mathbf q)=\lcm(q_1,q_2,q_3,q_4)$.  Then $Q(\mathbf q)\leq Q_0^4=X^{2/5}$.  By Lemma \ref{lem:periodic_average}, applied to
\[
        F_{\mathbf q}(n):=\prod_{i=1}^4 W_{q_i}(n;k),
\]
the total error made by replacing all interval averages by complete-residue averages is at most
\begin{align*}
&\frac1X
        \sum_{q_1,q_2,q_3,q_4}
        (\log p_1)(\log p_2)(\log p_3)(\log p_4)
        \lcm(q_1,q_2,q_3,q_4) \\
&\qquad +
\frac1X\left(
        \sum_{\substack{p\leq k,\ a\geq1\\p^a\leq Q_0}}
        \log p\right)^4,
\end{align*}
where the sums over $q_i$ are restricted by $q_i=p_i^{a_i}\leq Q_0$ and $p_i\leq k$.  Since the least common multiple is at most $q_1q_2q_3q_4$, the first term is bounded by
\[
        \frac1X\left(
        \sum_{\substack{p\leq k,\ a\geq1\\p^a\leq Q_0}}
        (\log p)p^a\right)^4.
\]
For each fixed $p$, $\sum_{p^a\leq Q_0}p^a\ll Q_0$, and $\theta(k)\ll k$.  Hence
\[
        \sum_{\substack{p\leq k,\ a\geq1\\p^a\leq Q_0}}(\log p)p^a
        \ll Q_0\theta(k)\ll Q_0k.
\]
Therefore the first error is $O_A(Q_0^4k^4/X)=o_A(1)$ uniformly for $k\leq A\log X$.  The $O(1/X)$ part of the periodic-averaging error is even smaller; for example,
\[
        \frac1X\left(
        \sum_{\substack{p\leq k,\ a\geq1\\p^a\leq Q_0}}
        \log p\right)^4
        \ll_A \frac{(k\log Q_0)^4}{X}=o_A(1),
\]
because each prime contributes at most $O(\log Q_0)$ levels after weighting by $\log p$.

It remains to bound the complete-residue fourth moment.  For each prime $p\leq k$, let $b_p$ be the largest integer with $p^{b_p}\leq Q_0$.  For all sufficiently large $X$, $b_p\geq1$ because $p\leq k\leq A\log X<Q_0$.  Let $N_p$ be uniform modulo $p^{b_p}$, with the variables $N_p$ independent for distinct primes $p$, and define
\[
        Z_p:=\log p\sum_{1\leq a\leq b_p}
        \left(
        \one_{N_p\bmod p^a<[k]_{p^a}}-
        \frac{[k]_{p^a}}{p^a}
        \right).
\]
The $Z_p$ are independent and have mean zero.  Lemma \ref{lem:crt_factorization} shows that the complete-residue contribution to the fourth moment is
\[
        \EE\left|\sum_{p\leq k}Z_p\right|^4.
\]
Repeated appearances of the same prime in a quadruple remain coupled through the same prime-power residue, while distinct primes factor independently; this is precisely what the variables $Z_p$ model.

By Lemma \ref{lem:tower_moments},
\[
        \EE |Z_p|^2\ll (\log p)^2(\alpha_p^*+1)^2,
        \qquad
        \EE |Z_p|^4\ll (\log p)^4(\alpha_p^*+1)^4,
\]
where $\alpha_p^*=\lfloor\log_p k\rfloor$.  Since $(\alpha_p^*+1)\log p\ll \log(2k)$ and $\pi(k)\ll k/\log(2k)$ for $k\geq2$,
\[
        \sum_{p\leq k}\EE |Z_p|^2\ll k\log(2k)+1,
        \qquad
        \sum_{p\leq k}\EE |Z_p|^4\ll k(\log(2k))^3+1.
\]
For independent mean-zero real random variables,
\[
        \EE\left|\sum_{p\leq k}Z_p\right|^4
        =\sum_{p\leq k}\EE Z_p^4
          +6\sum_{p<q}\EE Z_p^2\,\EE Z_q^2.
\]
Consequently,
\[
        \EE\left|\sum_{p\leq k}Z_p\right|^4
        \ll
        \left(\sum_{p\leq k}\EE |Z_p|^2\right)^2
        +\sum_{p\leq k}\EE |Z_p|^4
        \ll k^2(\log(2k))^2+1.
\]
Together with the interval-to-complete-residue error, this proves the lemma.
\end{proof}

\begin{lemma}[The truncated mean is the full mean]\label{lem:mean_interval}
Uniformly for $1\leq k\leq A\log X$,
\[
        m_{Q_0}(k)=m(k)+o_A(1).
\]
Moreover,
\[
        \EE_X U_k^{\leq Q_0}(n)=m(k)+o_A(1)
\]
uniformly for $1\leq k\leq A\log X$.
\end{lemma}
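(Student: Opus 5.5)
The lemma has two parts, and I will prove them in order: first the deterministic comparison of means, then the interval average.

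\emph{First claim.} The difference is
\[
m(k)-m_{Q_0}(k)=\sum_{p\leq k}\log p\sum_{p^a>Q_0}\frac{[k]_{p^a}}{p^a}.
\]
For large $X$ we have $p\leq k\leq A\log X<Q_0$. Hence every level with $p^a>Q_0$ also satisfies $p^a>k$, so $[k]_{p^a}=k$. The inner sum is therefore a geometric tail:
\[
k\sum_{p^a>Q_0}p^{-a}\leq \frac{k}{Q_0}\cdot\frac{1}{1-1/p}\ll \frac{k}{Q_0},
\]
uniformly in $p$. Summing with weight $\log p$ over $p\leq k$ gives a total of $O(k\,\theta(k)/Q_0)=O_A(L^2X^{-1/10})=o_A(1)$. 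This is uniform in $k\leq AL$. For small $X$ with $k\geq Q_0$ nothing uniform is needed, because the statement is asymptotic.

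\emph{Second claim.} Write
\[
\EE_X U_k^{\leq Q_0}(n)=\sum_{p\leq k,\ p^a\leq Q_0}(\log p)\,\PP_X\bigl([n]_{p^a}<[k]_{p^a}\bigr).
\]
By Lemma~\ref{lem:periodic_average} with $Q=p^a\leq Q_0\leq X$, each probability equals $[k]_{p^a}/p^a+O(p^a/X)$. The accumulated error is
\[
\frac1X\sum_{p\leq k}\log p\sum_{p^a\leq Q_0}p^a\ll \frac{Q_0\,\theta(k)}{X}\ll_A \frac{L\,X^{1/10}}{X}=o_A(1).
\]
This is the same estimate already used in the proof of Lemma~\ref{lem:fourth}. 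It shows $\EE_X U_k^{\leq Q_0}(n)=m_{Q_0}(k)+o_A(1)$ uniformly, and combining with the first claim gives $m(k)+o_A(1)$. For a sharper bound one could instead use Lemma~\ref{lem:short_window}, but the crude periodic bound already suffices.

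I expect no real obstacle. The only point needing care is checking that the implied constants do not depend on $k$, which holds because both the geometric-tail bound and the sum $\sum_{p^a\leq Q_0}p^a\ll Q_0$ are uniform in $p$.
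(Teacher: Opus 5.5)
Your proposal is correct and matches the paper's proof: the omitted levels $p^a>Q_0$ are bounded by a geometric tail giving $\ll k\,\theta(k)/Q_0=o_A(1)$, and termwise periodic averaging gives an accumulated error $\ll Q_0\,\theta(k)/X=o_A(1)$. The only differences are cosmetic: you use $[k]_{p^a}=k$ where the paper uses $[k]_{p^a}\leq k$, and you absorb the $O(1/X)$ term into $O(Q/X)$ via $Q\leq X$, whereas the paper bounds it separately.
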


\begin{proof}
The part of $m(k)$ omitted from $m_{Q_0}(k)$ is at most
\[
\sum_{p\leq k}\log p\sum_{p^a>Q_0}\frac{[k]_{p^a}}{p^a}
\leq k\sum_{p\leq k}\log p\sum_{p^a>Q_0}\frac1{p^a}
        \ll_A \frac{k^2}{Q_0}=o_A(1),
\]
uniformly for $k\leq A\log X$.

For each $q=p^a\leq Q_0$, Lemma \ref{lem:periodic_average} gives
\[
\EE_X\one_{[n]_q<[k]_q}
        =\frac{[k]_q}{q}+O\left(\frac qX+\frac1X\right).
\]
Therefore the total interval-averaging error is
\[
        \ll \frac1X\sum_{\substack{p\leq k,\ a\geq1\\p^a\leq Q_0}}(\log p)p^a
        +\frac1X\sum_{\substack{p\leq k,\ a\geq1\\p^a\leq Q_0}}\log p
        \ll_A \frac{Q_0 k}{X}+\frac{k\log Q_0}{X}=o_A(1).
\]
The two assertions follow.
\end{proof}

\begin{proposition}[Uniform concentration]\label{prop:uniform}
Fix $A>0$.  For every fixed $\delta>0$,
\[
\#\left\{X\leq n<2X:
\sup_{1\leq k\leq A\log X}|U_k(n)-(1-\gamma)k|>\delta\log X
\right\}=o_{A,\delta}(X),
\]
where the supremum is over integer $k$.  More quantitatively, with $L=\log X$,
\[
\begin{aligned}
&\#\left\{X\leq n<2X:
        \sup_{1\leq k\leq A L}|U_k(n)-m(k)|>\delta L
\right\} \\
&\qquad\qquad\ll_{A,\delta}X\frac{(\log L)^2}{L}+R_A(X),
\end{aligned}
\]
where $R_A(X)=o_A(X)$ is the exceptional-set size from Lemma \ref{lem:tail}.
\end{proposition}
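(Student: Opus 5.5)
Remove the exceptional set of Lemma \ref{lem:tail}, which has size $R_A(X)=o_A(X)$. Off that set, $U_k(n)=U_k^{\leq Q_0}(n)$ simultaneously for all $k\leq AL$, so it suffices to bound
\[
\#\Bigl\{n\in\II_X:\ \max_{1\leq k\leq AL}|U_k^{\leq Q_0}(n)-m(k)|>\delta L\Bigr\}.
\]
By Lemma \ref{lem:mean_interval}, $m_{Q_0}(k)=m(k)+o_A(1)$ uniformly in $k\leq AL$. For $X$ large, the event $|U_k^{\leq Q_0}(n)-m(k)|>\delta L$ therefore forces $|S_k(n)|>\delta L/2$, where $S_k(n)=U_k^{\leq Q_0}(n)-m_{Q_0}(k)$ is the centered truncated sum from Lemma \ref{lem:fourth}.

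The key step is a plain union bound over the $O_A(L)$ integer values of $k$, combined with Markov's inequality applied to the fourth moment. Lemma \ref{lem:fourth} gives
\[
\PP_X\bigl(|S_k(n)|>\delta L/2\bigr)\leq \frac{16\,\EE_X|S_k|^4}{\delta^4L^4}\ll_{A,\delta}\frac{k^2(\log 2k)^2+1}{L^4}\ll_{A}\frac{(\log L)^2}{\delta^4 L^2},
\]
using $k\leq AL$. Summing over the at most $AL$ values of $k$ gives a total probability $\ll_{A,\delta}(\log L)^2/L$. Multiplying by $\#\II_X\asymp X$ and adding $R_A(X)$ yields the quantitative bound. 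The fourth moment, rather than the second, is what makes this work: Chebyshev would give only $L\cdot k\log k/L^2\asymp\log L$ after the union bound, which does not tend to zero. This is precisely why Lemma \ref{lem:fourth} was proved, and I expect no further obstacle here beyond checking that the uniformity in $k$ of Lemmas \ref{lem:fourth} and \ref{lem:mean_interval} covers all $k\leq AL$ at once, which it does as stated.

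For the first, qualitative statement, apply the second uniform assertion of Lemma \ref{lem:mean_arithmetic} with this $L$:
\[
\sup_{1\leq k\leq AL}|m(k)-(1-\gamma)k|=o_A(L),
\]
which is at most $\delta L/2$ for large $X$. Hence
\[
\sup_k|U_k(n)-(1-\gamma)k|>\delta L\quad\Longrightarrow\quad\sup_k|U_k(n)-m(k)|>\delta L/2.
\]
The quantitative bound with $\delta/2$ in place of $\delta$ then gives a count of $O_{A,\delta}(X(\log L)^2/L)+o_A(X)=o_{A,\delta}(X)$. The range $k\leq AL$ starts at $k=1$, where $U_1=0=m(1)$, so small $k$ cause no trouble.
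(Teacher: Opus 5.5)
Your proposal is correct and follows the paper's proof essentially step for step: remove the exceptional set of Lemma \ref{lem:tail}, apply Markov's inequality to the fourth moment for each fixed $k$, take a union bound over the $O_A(L)$ values of $k$, and finish with the uniform estimate $\sup_{k\leq AL}|m(k)-(1-\gamma)k|=o_A(L)$. The only difference is cosmetic. You center at $m_{Q_0}(k)$ and absorb the $o_A(1)$ shift into the threshold $\delta L/2$, whereas the paper combines Lemmas \ref{lem:fourth} and \ref{lem:mean_interval} into a fourth-moment bound centered directly at $m(k)$.
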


\begin{proof}
By Lemma \ref{lem:tail}, outside a set of size $R_A(X)=o_A(X)$ we may replace every $U_k(n)$ by $U_k^{\leq Q_0}(n)$, simultaneously for all integers $1\leq k\leq A L$.

For fixed $k\leq A L$, Lemmas \ref{lem:fourth} and \ref{lem:mean_interval} imply, for all sufficiently large $X$,
\[
\EE_X|U_k^{\leq Q_0}(n)-m(k)|^4
        \ll_A L^2(\log L)^2.
\]
Markov's inequality gives
\[
\#\{X\leq n<2X:|U_k^{\leq Q_0}(n)-m(k)|>\delta L\}
        \ll_{A,\delta}X\frac{(\log L)^2}{L^2}.
\]
There are $O_A(L)$ integer values of $k$ in the range $1\leq k\leq A L$.  Taking the union over these values gives
\[
        \ll_{A,\delta}X\frac{(\log L)^2}{L}
\]
exceptions, in addition to the set from Lemma \ref{lem:tail}.  This is the only point at which a fourth moment, rather than only a second moment, is needed: after Markov's inequality the probability for one fixed $k$ must be summable over $O(L)$ possible values of $k$.  This proves the quantitative estimate around $m(k)$.

Finally, Lemma \ref{lem:mean_arithmetic} gives
\[
        \sup_{1\leq k\leq A L}|m(k)-(1-\gamma)k|=o_A(L).
\]
For sufficiently large $X$, this deterministic error is at most $\delta L/2$.  Applying the estimate around $m(k)$ with threshold $\delta L/2$ gives the asserted concentration around $(1-\gamma)k$.
\end{proof}

\section{Proof of the normal-order theorem}\label{sec:normal-order}

\begin{proof}[Proof of Theorem \ref{thm:main}]
Fix $c>0$ and put
\[
        C:=\frac{c}{1-\gamma}.
\]
Let $\eta>0$.  Choose a number $\sigma$ with
\[
        0<\sigma<\min\{C/2,\eta/4\},
\]
and set $A=C+2\sigma$.  Choose also
\[
        0<\delta<\frac{(1-\gamma)\sigma}{4}.
\]
By Proposition \ref{prop:uniform}, for all but $o(X)$ integers $n\in[X,2X)$,
\begin{equation}\label{eq:main-proof-uniform}
        |U_k(n)-(1-\gamma)k|\leq \delta\log X
\end{equation}
for every integer $1\leq k\leq A\log X$.

Consider an $n$ satisfying \eqref{eq:main-proof-uniform}.  If $1\leq k\leq (C-\sigma)\log X$, then
\[
        U_k(n)\leq (1-\gamma)(C-\sigma)\log X+
        \delta\log X
        \leq c\log X-\frac34(1-\gamma)\sigma\log X.
\]
Since $\log n\geq\log X$ for $n\in[X,2X)$, the last expression is smaller than $c\log n$ for all sufficiently large $X$.  The values $k=0$ and $k=1$ also do not satisfy $u(n,k)>n^c$, since $u(n,k)=1$ for them.  Hence no integer $k\leq (C-\sigma)\log X$ can satisfy the threshold inequality, and
\[
        f_c(n)>(C-\sigma)\log X.
\]

For the reverse inequality, set
\[
        k_+:=\left\lfloor (C+\sigma)\log X\right\rfloor.
\]
For all sufficiently large $X$, $1\leq k_+\leq A\log X$ and $k_+\leq X\leq n$.  The same uniform estimate gives
\begin{align*}
        U_{k_+}(n)
        &\geq (1-\gamma)k_+ -\delta\log X  \\
        &\geq c\log X +(1-\gamma)\sigma\log X -O(1)-\delta\log X  \\
        &\geq c\log X +\frac34(1-\gamma)\sigma\log X-O(1).
\end{align*}
Because $\log n\leq\log X+\log 2$, the positive term $\frac34(1-\gamma)\sigma\log X$ eventually dominates the bounded difference $c(\log n-\log X)$ and the $O(1)$ term.  Thus $U_{k_+}(n)>c\log n$ for all sufficiently large $X$.  Consequently $u(n,k_+)>n^c$, so $f_c(n)$ is finite and
\[
        f_c(n)\leq k_+\leq (C+\sigma)\log X.
\]

Combining the two bounds and using $\log X\leq\log n\leq\log X+
\log 2$, we obtain, for all sufficiently large $X$ and all non-exceptional $n\in[X,2X)$,
\[
        \frac{(C-\sigma)\log X}{\log X+\log 2}
        <\frac{f_c(n)}{\log n}
        \leq
        C+\sigma.
\]
Since $\sigma<\eta/4$, these inequalities imply
\[
        \left|\frac{f_c(n)}{\log n}-C\right|\leq \eta
\]
for all sufficiently large $X$.

It remains to pass from dyadic intervals to natural density.  Let $E_j$ be the exceptional set in $[2^j,2^{j+1})$ for the fixed tolerance $\eta$.  The dyadic estimate gives $|E_j|=o(2^j)$.  If $N\in[2^J,2^{J+1})$, then
\[
        \sum_{j<J}|E_j|=o(2^J)=o(N).
\]
Indeed, given $\varepsilon>0$, choose $J_0$ such that $|E_j|\leq\varepsilon 2^j$ for all $j\geq J_0$.  The contribution of $j<J_0$ is fixed, while the tail is at most $\varepsilon\sum_{J_0\leq j<J}2^j\leq\varepsilon 2^J$.  Since $\varepsilon$ is arbitrary, the full sum is $o(2^J)$.  The possible contribution from the final partial interval $[2^J,N]$ is bounded by the exceptional set in the full dyadic interval $[2^J,2^{J+1})$, and is again $o(N)$.  This proves the theorem.
\end{proof}

\begin{remark}
No monotonicity of $U_k(n)$ in $k$ is used.  This is essential because both the binomial coefficient $\binom nk$ and the set of primes $p\leq k$ vary with $k$.  The lower bound on $f_c(n)$ rules out every $k$ below $(C-\sigma)\log X$ by uniform concentration.  The upper bound then exhibits one specific value $k_+$ above $(C+\sigma)\log X$ for which the threshold inequality holds.
\end{remark}

\section{Gaussian fluctuations}\label{sec:clt}

This section proves Theorem \ref{thm:clt}.  We separate the prime levels $p$ from the higher prime-power levels $p^a$, $a\geq2$.  Only the prime levels survive on the scale $\sqrt{k\log k}$.

The separation is slightly delicate.  If the higher levels were omitted before centering, the mean would be wrong by an amount of order $k$ in general.  The statement therefore centers by the full complete-residue mean $m(k)$, and only then proves that the centered higher-level contribution is negligible.  The prime levels, in contrast, already have variance of order $k\log k$ and satisfy a triangular-array central limit theorem.

For $k\geq2$, define the prime-level centered sum
\[
        W_k(n):=\sum_{p\leq k}(\log p)
        \left(\one_{[n]_p<[k]_p}-\alpha_p(k)\right).
\]
The corresponding complete-residue variance is
\[
        V(k)=\sum_{p\leq k}(\log p)^2\alpha_p(k)(1-\alpha_p(k)).
\]

\begin{lemma}[The variance constant]\label{lem:variance_constant}
As $k\to\infty$,
\[
        V(k)=(\kappa+o(1))k\log k,
        \qquad
        \kappa:=2-\log(2\pi)>0.
\]
\end{lemma}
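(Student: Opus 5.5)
Since $\alpha_p(k)=\{k/p\}$, we need the asymptotics of
\[
V(k)=\sum_{p\leq k}(\log p)^2\{k/p\}(1-\{k/p\}).
\]
The plan is to group primes by the value $j=\lfloor k/p\rfloor$. Equivalently, we substitute $p=k/t$ with $t\in[1,\infty)$ and compare the weighted prime sum with the integral
\[
\int_1^{\infty}\{t\}(1-\{t\})\,\frac{dt}{t^2}.
\]

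First, primes $p\leq k/\log^2 k$ (say) contribute at most $\sum_{p\leq k/\log^2k}(\log p)^2\ll (k/\log^2 k)\log k=o(k\log k)$ by Lemma \ref{lem:standard_estimates}, so they can be discarded. On the remaining range we have $\log p=\log k+O(\log\log k)$. Hence
\[
V(k)=(\log k)(1+o(1))\sum_{k/\log^2k<p\leq k}(\log p)\,g(k/p)+o(k\log k),
\qquad g(t):=\{t\}(1-\{t\}).
\]
The inner sum is $\int g(k/x)\,d\theta(x)$. Since $0\leq g\leq 1/4$, and $g$ is continuous and piecewise smooth, partial summation together with $\theta(x)=x+o(x)$ gives $\int g(k/x)\,dx+o(k)$. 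To justify this, we split into the finitely many intervals $k/(j+1)<x\leq k/j$ with $j\leq J$ for a large fixed $J$. On each such interval $g(k/x)$ is monotone on two pieces, so the prime number theorem applies directly. The tail $x\leq k/J$ is bounded trivially by $\theta(k/J)/4\ll k/J$.

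Next we evaluate $\int_0^k g(k/x)\,dx=k\int_1^\infty g(t)t^{-2}\,dt$. This reduces the lemma to the identity
\[
I:=\int_1^\infty \frac{\{t\}(1-\{t\})}{t^2}\,dt=2-\log(2\pi).
\]
On $[j,j+1]$, write $t=j+u$, so that $g=u-u^2=(t-j)(j+1-t)=-t^2+(2j+1)t-j(j+1)$. Integrating against $t^{-2}$ gives
\[
-1+(2j+1)\log\frac{j+1}{j}-j(j+1)\Bigl(\frac1j-\frac1{j+1}\Bigr)=(2j+1)\log\frac{j+1}{j}-2.
\]
Summing over $j\leq J-1$ yields
\[
\sum_{j<J}(2j+1)\bigl(\log(j+1)-\log j\bigr)-2(J-1).
\]
By summation by parts this equals $(2J-1)\log J-2\log((J-1)!)-2(J-1)$. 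Stirling's formula, $\log (J-1)!=(J-\tfrac12)\log J-J+\tfrac12\log(2\pi)+o(1)$, then gives the limit $2-\log(2\pi)$.

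Positivity is immediate, since $\log(2\pi)\approx1.8379<2$; it also follows from $g\geq0$ not vanishing identically.

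The main obstacle is the uniform passage from the prime sum to the integral, and specifically the interchange of the infinitely many oscillation intervals of $g(k/x)$ with the prime number theorem. The truncation at a fixed $J$, with the error $O(k/J)$ followed by $J\to\infty$, handles this. The rest is routine computation.
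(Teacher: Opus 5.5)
Your proposal is correct and follows essentially the same route as the paper: discard the small primes trivially, compare the remaining prime sum with $\int g(k/x)\,dx$ via the prime number theorem on finitely many intervals $k/(j+1)<x\leq k/j$ with $j\leq J$ (incurring an error $O(k/J)$ and then letting $J\to\infty$), and evaluate the integral through $\int_j^{j+1}g(t)t^{-2}\,dt=(2j+1)\log(1+1/j)-2$, telescoping, and Stirling. The only cosmetic difference is that you first cut at $p\leq k/\log^2 k$ so that one factor $\log p$ can be replaced uniformly by $\log k$, whereas the paper keeps the weight $\log t$ inside $g_k(t)=(\log t)\psi(k/t)$ and bounds its total variation.
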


\begin{proof}
Let
\[
        \psi(y):=\fracp{y}\left(1-\fracp{y}\right).
\]
Since $[k]_p/p=\{k/p\}$, we have
\[
        V(k)=\sum_{p\leq k}(\log p)^2\psi(k/p).
\]
The function $\psi$ is bounded by $1/4$.  Hence, for a fixed integer $M\geq1$, the primes $p\leq k/(M+1)$ contribute at most
\[
        \ll \sum_{p\leq k/(M+1)}(\log p)^2
        \ll \frac{k\log k}{M+1},
\]
by Lemma \ref{lem:standard_estimates}.

It remains to treat $k/(M+1)<p\leq k$.  For this prime range,
\[
\sum_{k/(M+1)<p\leq k}(\log p)^2\psi(k/p)
=\int_{k/(M+1)}^k (\log t)\psi(k/t)\,d\theta(t).
\]
For fixed $M$, put
\[
        g_k(t):=(\log t)\psi(k/t)
        \qquad(k/(M+1)\leq t\leq k).
\]
We claim that $g_k$ has total variation $O_M(\log k)$.  The function $\psi(y)=\{y\}(1-\{y\})$ is continuous at integers, since both one-sided limits there are $0$.  Thus $g_k$ has no jump discontinuities at the points $t=k/j$.  On each interval
\[
        \frac{k}{j+1}<t<\frac{k}{j},
        \qquad 1\leq j\leq M,
\]
one has
\[
        \psi(k/t)=\left(\frac{k}{t}-j\right)\left(1-\frac{k}{t}+j\right),
\]
and hence $g_k'(t)=O_M((\log k)/t)$ on that interval.  Since there are only $M$ such intervals, the bounded-variation estimate follows.

By the prime number theorem,
\[
        \theta(t)=t+o(k)
\]
uniformly on $[k/(M+1),k]$ for fixed $M$.  Write $E(t)=\theta(t)-t$.  Stieltjes integration by parts for bounded-variation functions gives
\[
        \int_{k/(M+1)}^k g_k(t)\,dE(t)
        =O\left(\sup_{k/(M+1)\leq t\leq k}|E(t)|
        \bigl(\|g_k\|_\infty+\operatorname{Var}(g_k)\bigr)\right)
        =o_M(k\log k).
\]
Therefore
\[
\int_{k/(M+1)}^k g_k(t)\,d\theta(t)
=\int_{k/(M+1)}^k g_k(t)\,dt+o_M(k\log k).
\]
Changing variables $y=k/t$ gives
\[
\int_{k/(M+1)}^k (\log t)\psi(k/t)\,dt
        =k\int_1^{M+1}\frac{\log k-\log y}{y^2}\psi(y)\,dy.
\]
After division by $k\log k$ and passage to the limit $k\to\infty$ with $M$ fixed, this becomes
\[
        \int_1^{M+1}\frac{\psi(y)}{y^2}\,dy.
\]
The initial range $p\leq k/(M+1)$ contributes $O(1/M)$ after normalization; since $0\leq\psi\leq1/4$, the tail of the integral from $M+1$ to infinity is also $O(1/M)$.  Letting $M\to\infty$ gives
\[
        \lim_{k\to\infty}\frac{V(k)}{k\log k}
        =\int_1^\infty \frac{\fracp{y}(1-\fracp{y})}{y^2}\,dy.
\]

It remains only to evaluate this elementary integral.  On the interval $y=m+x$, with $m\geq1$ and $0\leq x<1$, one has $\{y\}=x$, and a direct integration gives
\[
\int_0^1\frac{x(1-x)}{(m+x)^2}\,dx
        =(2m+1)\log\left(1+\frac1m\right)-2.
\]
Therefore the partial integral over $1\leq y<N+1$ equals
\begin{align*}
\sum_{m=1}^N\left((2m+1)\log\left(1+\frac1m\right)-2\right)
&=(2N+1)\log(N+1)-2\log(N!)-2N.
\end{align*}
By Stirling's formula this tends to $2-\log(2\pi)$.  This proves the asserted asymptotic for $V(k)$ and the positivity of $\kappa$.
\end{proof}

\begin{lemma}[Prime-level central limit theorem]\label{lem:prime_clt}
Fix $A>0$.  If $k=k(X)\to\infty$ and $k\leq A\log X$, then
\[
        \frac{W_k(n)}{\sqrt{V(k)}}\Rightarrow \normal(0,1)
        \qquad(n\text{ uniform in }\II_X).
\]
Moreover, for every fixed integer $r\geq1$,
\[
        \EE_X\left(\frac{W_k(n)}{\sqrt{V(k)}}\right)^r
        \longrightarrow
        \begin{cases}
        0, & r\text{ odd},\\
        (r-1)!!, & r\text{ even}.
        \end{cases}
\]
\end{lemma}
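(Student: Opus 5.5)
We prove the lemma by the method of moments. First compute the moments of $W_k$ in an independent model, then transfer them to the interval $\II_X$ by periodic averaging. Convergence of moments to the Gaussian moments then gives the distributional statement, because the normal law is determined by its moments.

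\emph{Independent model.} For $p\leq k$, let $N_p$ be independent and uniform modulo $p$. Put $\xi_p:=(\log p)(\one_{N_p<[k]_p}-\alpha_p(k))$. These are independent and mean zero, with $\EE\xi_p^2=(\log p)^2\alpha_p(k)(1-\alpha_p(k))$, so $\sum_p\EE\xi_p^2=V(k)$. Each satisfies $|\xi_p|\leq\log p\leq\log k$. By Lemma \ref{lem:variance_constant}, $V(k)\asymp k\log k$, hence
\[
\max_{p\leq k}|\xi_p|/\sqrt{V(k)}\ll \sqrt{\log k/k}\to0.
\]
For a triangular array of independent, uniformly asymptotically negligible, bounded summands, the standard cumulant argument gives convergence of all moments of $\sum_p\xi_p/\sqrt{V(k)}$ to those of $\normal(0,1)$. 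Concretely, the $r$-th cumulant of the normalized sum is bounded by
\[
\sum_p\EE|\xi_p|^r/V(k)^{r/2}\leq (\max_p|\xi_p|)^{r-2}V(k)/V(k)^{r/2}\to0\qquad(r\geq3),
\]
while the second cumulant equals $1$. Lindeberg's theorem gives the distributional statement in the model.

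\emph{Transfer to $\II_X$.} Fix $r$. Expand
\[
\EE_X W_k(n)^r=\sum_{p_1,\dots,p_r\leq k}\prod_i(\log p_i)\,\EE_X\prod_i W_{p_i}(n;k).
\]
Each product is periodic modulo $\lcm(p_1,\dots,p_r)\leq k^r\leq (A\log X)^r=o(X)$ and bounded by $1$. By Lemma \ref{lem:periodic_average}, each interval average equals the complete-residue average up to $O(k^r/X)$. By Lemma \ref{lem:crt_factorization}, the complete-residue average equals the model expectation $\EE\prod_i\xi_{p_i}/\prod_i\log p_i$; repeated primes stay coupled through the same residue, as in the model. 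The total error is therefore
\[
\ll \frac{k^r}{X}\Bigl(\sum_{p\leq k}\log p\Bigr)^r\ll \frac{k^{2r}}{X}\ll_{A,r}\frac{(\log X)^{2r}}{X}=o(1).
\]
After division by $V(k)^{r/2}$, which tends to infinity, this error is negligible. Hence
\[
\EE_X(W_k/\sqrt{V(k)})^r=\EE\Bigl(\sum_p\xi_p/\sqrt{V(k)}\Bigr)^r+o(1),
\]
which converges to $0$ for odd $r$ and to $(r-1)!!$ for even $r$.

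\emph{Main obstacle.} The step needing care is the uniform moment convergence in the model for an arbitrary fixed $r$, rather than just the CLT. With bounded summands, however, the cumulant bound above handles it directly. Alternatively, one can expand the $r$-th moment over set partitions and show that only pair partitions survive; partitions containing a block of size $\geq3$ lose a factor $(\log k)^{j}/V(k)^{j/2}\cdot\pi(k)^{(\cdot)}$, which is small. The transfer step is routine only because every modulus is at most $k^r=(\log X)^{O(1)}$, far below $X$, so no truncation of the kind in Lemma \ref{lem:tail} is needed at the prime levels.
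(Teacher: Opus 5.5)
Your proposal is correct and follows the paper's proof. It uses the same independent model for the prime levels, the same transfer of each fixed moment to $\II_X$ via Lemma \ref{lem:periodic_average} and Lemma \ref{lem:crt_factorization} with total error $O_{r,A}((\log X)^{2r}/X)$, and the method of moments. The one difference is how you get moment convergence in the model: you bound the $r$-th cumulant of the normalized sum by $\ll_r(\max_p|\xi_p|/\sqrt{V(k)})^{r-2}$ using boundedness of the summands, whereas the paper invokes the Lindeberg--Feller theorem and then Rosenthal's inequality for uniform integrability. Your version is slightly more self-contained and makes the Lindeberg step redundant.
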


\begin{proof}
Let $(B_p)_{p\leq k}$ be independent Bernoulli random variables with
\[
        \PP(B_p=1)=\alpha_p(k),
\]
and put
\[
        W_k^*:=\sum_{p\leq k}(\log p)(B_p-
        \alpha_p(k)).
\]
Then $\Var(W_k^*)=V(k)$.  Since
\[
        \max_{p\leq k}\frac{\log p}{\sqrt{V(k)}}
        \ll \sqrt{\frac{\log k}{k}}\to0
\]
by Lemma \ref{lem:variance_constant}, the Lindeberg condition for this triangular array is immediate.  Hence
\[
        \frac{W_k^*}{\sqrt{V(k)}}\Rightarrow \normal(0,1)
\]
by the Lindeberg-Feller central limit theorem; see, for example, Billingsley \cite[Th.~27.2]{BillingsleyPM}.

For every fixed $r$, the normalized $r$-th moments also converge to the Gaussian moments.  To see this, choose a fixed $s>\max\{r,2\}$.  Rosenthal's inequality \cite{Rosenthal1970} gives
\[
        \EE|W_k^*|^s
        \ll_s V(k)^{s/2}+\sum_{p\leq k}(\log p)^s.
\]
By Lemma \ref{lem:standard_estimates},
\[
        \sum_{p\leq k}(\log p)^s\ll_s k(\log k)^{s-1}=o(V(k)^{s/2})
\]
for fixed $s>2$.  Thus the normalized variables $W_k^*/\sqrt{V(k)}$ are uniformly integrable in order $r$, and convergence of $r$-th moments follows from convergence in distribution.

It remains to transfer the fixed moments from the independent model to the interval $\II_X$.  Fix an integer $r\geq1$ and expand $W_k(n)^r$.  For a tuple $(p_1,\ldots,p_r)$, the product of the corresponding centered residue functions is periodic modulo a divisor of $p_1\cdots p_r$ and is bounded in absolute value by $1$.  Lemma \ref{lem:periodic_average} shows that the interval average of this product differs from its complete-residue average by
\[
        O_r\left(\frac{p_1\cdots p_r}{X}+\frac1X\right).
\]
Multiplying by the weights and summing over all tuples gives the total error
\[
        \ll_r \frac1X\left(\sum_{p\leq k}p\log p\right)^r
        +\frac1X\left(\sum_{p\leq k}\log p\right)^r
        \ll_{r,A} \frac{(\log X)^{2r}}{X}
        =o(V(k)^{r/2}).
\]
The complete-residue average is exactly the corresponding moment of $W_k^*$, by Lemma \ref{lem:crt_factorization}: repeated appearances of the same prime use the same Bernoulli variable $B_p$, while distinct primes factor independently.  Therefore
\[
        \EE_X W_k(n)^r=\EE (W_k^*)^r+o(V(k)^{r/2})
\]
for every fixed $r$.  The displayed moment convergence follows.  Since the standard normal distribution is moment-determinate, the method of moments gives convergence in distribution.
\end{proof}

We now prove that higher prime powers are negligible at the same scale.  For $X$ sufficiently large in the range $k\leq A\log X$, define
\[
        R_k(n):=\sum_{p\leq k}\sum_{a\geq2}(\log p)
        \left(\one_{[n]_{p^a}<[k]_{p^a}}-
        \frac{[k]_{p^a}}{p^a}\right).
\]
For $n\geq k$ this series is absolutely convergent: for all sufficiently large $a$ the indicator is zero, and the deterministic tail is dominated by $\sum_a k/p^a$.

\begin{lemma}[Higher prime powers are negligible]\label{lem:higher_powers}
Fix $A>0$.  If $k=k(X)\to\infty$ and $k\leq A\log X$, then
\[
        \EE_X |R_k(n)|^2=o(k\log k).
\]
\end{lemma}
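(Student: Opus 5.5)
We split $R_k(n)$ according to the size of the modulus $q=p^a$ ($a\ge2$). Put $R_k=R_k^{\le}+R_k^{>}$, where $R_k^{\le}$ collects the levels $p^a\le Q_0=X^{1/10}$ and $R_k^{>}$ the levels $p^a>Q_0$. Since $(x+y)^2\le 2x^2+2y^2$, it suffices to bound $\EE_X|R_k^{\le}|^2$ and $\EE_X|R_k^{>}|^2$ separately by $o(k\log k)$.

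\emph{Truncated part.} Expand the square as a double sum over pairs $(q_1,q_2)$. By Lemma \ref{lem:periodic_average} with period $\lcm(q_1,q_2)\le Q_0^2$, the interval average equals the complete-residue average up to a total error
\[
\ll X^{-1}\Bigl(\sum_{p\le k,\,p^a\le Q_0}(\log p)p^a\Bigr)^2\ll Q_0^2k^2/X=o(1).
\]
This is the same computation as in Lemma \ref{lem:fourth}. By Lemma \ref{lem:crt_factorization}, the complete-residue average is $\sum_{p\le k}\EE Z_p'^2$, where $Z_p'$ is the one-prime tower variable of Lemma \ref{lem:tower_moments} restricted to levels $a\ge2$ and the $Z_p'$ for distinct primes are independent with mean zero.

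We bound $\EE Z_p'^2$ in two ranges.
\begin{itemize}
\item If $p>\sqrt k$, every level $a\ge2$ has $p^a>k$. The nested-events argument of Lemma \ref{lem:tower_moments} then gives $\PP(T\ge r)\le k/p^{1+r}$, so $\EE Z_p'^2\ll(\log p)^2k/p^2$. Summing over $p>\sqrt k$ gives $\ll(\log k)^2k\sum_{p>\sqrt k}p^{-2}\ll \sqrt k\log k$.
\item If $p\le\sqrt k$, Lemma \ref{lem:tower_moments} (whose proof applies verbatim to the subfamily $a\ge2$) gives $\EE Z_p'^2\ll(\log p)^2(\alpha_p^*+1)^2\ll(\log k)^2$. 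Summing over $\pi(\sqrt k)\ll\sqrt k/\log k$ primes gives $\ll\sqrt k\log k$.
\end{itemize}
Hence $\EE_X|R_k^{\le}|^2\ll\sqrt k\log k+o(1)=o(k\log k)$.

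\emph{Sparse tail.} For $q=p^a>Q_0$ with $X$ large, $q>k$, so $[k]_q=k$. The deterministic part has size at most
\[
\sum_{p\le k}\log p\sum_{p^a>Q_0}k/p^a\ll k^2/Q_0=o(1).
\]
The random part is $\sum(\log p)\one_{[n]_q<k}$, summed over $Q_0<q\le 2X$; moduli $q>2X$ contribute nothing. To avoid paying for incomplete blocks of length $q$, we use Lemma \ref{lem:short_window} rather than Lemma \ref{lem:periodic_average}. Within one prime tower the events are nested, so the random sum is at most $(\log p)\,T_p$ with $T_p$ bounded by $O(L)$. Its square is therefore controlled by
\[
\Bigl(\sum_{q}(\log p)\one_{[n]_q<k}\Bigr)^2\le \Bigl(\sum_q \log p\Bigr)\sum_q(\log p)\one_{[n]_q<k}.
\]
The first factor is $\ll \theta(k)\,L/\log 2\ll kL$. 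The expectation of the second is, by Lemma \ref{lem:short_window}, $\ll\sum_{p\le k}\log p\sum_{a}(k/p^a+k/X)\ll k^2/Q_0+k^2L/X$. So $\EE_X|R_k^{>}|^2\ll kL(k^2/Q_0+k^2L/X)=o(1)$ for $k\le AL$.

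The main obstacle is this tail: the moduli can be as large as $X$, so complete-residue averaging fails there. The Cauchy–Schwarz step above, combined with the short-window bound, is the first thing I would try. If constants were lost there, a fallback is to bound the tail on the exceptional set of Lemma \ref{lem:tail}, but that only gives $o(X)$ measure, not a second-moment bound, so the short-window route is preferable.
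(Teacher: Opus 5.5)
Your proof is correct and follows the paper's argument essentially step for step. You split by the size of $p^a$, treat the small moduli by periodic averaging, Chinese-remainder factorization and the one-prime tower bounds in the two ranges $p\le\sqrt k$ and $p>\sqrt k$, and control the sparse tail by $\EE_X T^2\le\|T\|_\infty\,\EE_X T$ together with Lemma \ref{lem:short_window}. The only difference is your cutoff $Q_0=X^{1/10}$ in place of the paper's $Q_1=\exp(\sqrt{\log X})$. This is immaterial, since both your averaging error $Q_0^2k^2/X$ and your tail bound $kL(k^2/Q_0+k^2L/X)$ are still $o(1)$.
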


\begin{proof}
Put $L=\log X$ and
\[
        Q_1:=\exp(\sqrt L).
\]
Since $k\leq A L$, one has $Q_1>k$ for all sufficiently large $X$.  Split
\[
        R_k=R_k^{\leq Q_1}+R_k^{>Q_1}
\]
according to whether $p^a\leq Q_1$ or $p^a>Q_1$.

We first prove that the tail is negligible.  Let
\[
        T(n):=\sum_{\substack{p\leq k,\ a\geq2\\p^a>Q_1}}
        (\log p)\one_{[n]_{p^a}<[k]_{p^a}},
\]
and let
\[
        M:=\sum_{\substack{p\leq k,\ a\geq2\\p^a>Q_1}}
        (\log p)\frac{[k]_{p^a}}{p^a}.
\]
Then $R_k^{>Q_1}=T-M$.  Because $p^a>Q_1>k$, we have $[k]_{p^a}=k$ throughout this tail, and the geometric-tail bound gives
\[
        M\leq k\sum_{p\leq k}\log p\sum_{p^a>Q_1}\frac1{p^a}
        \ll \frac{k^2}{Q_1}=o(1).
\]
For a prime power $q=p^a>Q_1$, the event $[n]_q<k$ occupies the $k$ residue classes $0,1,\ldots,k-1$ modulo $q$.  It is important here not to use the cruder periodic averaging error $O(q/X)$, since in the tail $q$ may be much larger than $k$.  Lemma \ref{lem:short_window} gives the sharper estimate
\[
        \PP_X([n]_q<k)\ll \frac{k}{q}+\frac{k}{X}.
\]
If $q>2X$, then $[n]_q=n\geq X>k$, so the uncentered indicator is actually zero.  Hence
\begin{align*}
\EE_X T
&\ll k\sum_{p\leq k}\log p\sum_{p^a>Q_1}\frac1{p^a}
 +\frac{k}{X}\sum_{p\leq k}\sum_{\substack{a\geq2\\p^a\leq2X}}\log p  \\
&\ll \frac{k^2}{Q_1}+\frac{k^2L}{X\log k}=o(1).
\end{align*}
The second term uses
\[
        \sum_{p\leq k}\sum_{\substack{a\geq2\\p^a\leq2X}}\log p
        \ll \sum_{p\leq k}\frac{L}{\log p}\log p
        \ll L\pi(k)\ll \frac{kL}{\log k}.
\]
Moreover
\[
        0\leq T(n)\leq
        \sum_{p\leq k}\sum_{\substack{a\geq2\\p^a\leq2X}}\log p
        \ll \frac{kL}{\log k}.
\]
Combining this supremum bound with the displayed estimate for $\EE_XT$ gives
\[
        \EE_X T^2\leq \|T\|_\infty\EE_XT
        \ll \frac{kL}{\log k}\left(\frac{k^2}{Q_1}+\frac{k^2L}{X\log k}\right)=o(1),
\]
since $k\leq A L$ and $Q_1=\exp(\sqrt L)$.  Therefore
\[
        \EE_X|R_k^{>Q_1}|^2
        =\EE_X|T-M|^2
        \leq 2\EE_XT^2+2M^2=o(1).
\]

It remains to estimate the part with $p^a\leq Q_1$.  Expanding the second moment and applying Lemma \ref{lem:periodic_average} to each pair of residue functions gives an interval-to-complete-residue error at most
\[
        \ll \frac1X\left(\sum_{\substack{p\leq k,\ a\geq2\\p^a\leq Q_1}}(\log p)p^a\right)^2
        +\frac1X\left(\sum_{\substack{p\leq k,\ a\geq2\\p^a\leq Q_1}}\log p\right)^2
        \ll \frac{Q_1^2k^2}{X}+\frac{k^2(\log Q_1)^2}{X}=o(1).
\]
For the complete-residue average, Lemma \ref{lem:crt_factorization} separates the different primes.  For each $p\leq k$, let $b_p$ be the largest integer with $p^{b_p}\leq Q_1$, let $N_p$ be uniform modulo $p^{b_p}$, and set
\[
        H_p:=\sum_{2\leq a\leq b_p}
        \left(\one_{N_p\bmod p^a<[k]_{p^a}}-
        \frac{[k]_{p^a}}{p^a}\right),
\]
with the convention that $H_p=0$ if $b_p<2$.  The complete second moment is
\[
        \sum_{p\leq k}(\log p)^2\EE H_p^2,
\]
because the centered prime-tower variables have mean zero and are independent for distinct primes.

If $p\leq\sqrt k$, there are $O(\log k/\log p)$ levels $a\geq2$ with $p^a\leq k$, and their centered sum is bounded deterministically by this number.  The levels $p^a>k$ are nested as in Lemma \ref{lem:tower_moments}, and their centered contribution has bounded second moment.  Consequently
\[
        \EE H_p^2\ll 1+\left(\frac{\log k}{\log p}\right)^2
        \qquad(p\leq\sqrt k).
\]
If $p>\sqrt k$, then every level $a\geq2$ satisfies $p^a>k$ and hence $[k]_{p^a}=k$.  With
\[
        G_p:=\sum_{2\leq a\leq b_p}\one_{N_p\bmod p^a<k},
\]
the events are nested: $G_p\geq r$ implies $N_p\bmod p^{r+1}<k$.  Therefore
\[
        \PP(G_p\geq r)\leq \frac{k}{p^{r+1}}
        \qquad(r\geq1).
\]
Thus
\[
        \EE G_p^2\ll \sum_{r\geq1}r\,\PP(G_p\geq r)
        \ll \sum_{r\geq1}r\,\frac{k}{p^{r+1}}
        \ll \frac{k}{p^2}.
\]
Since $H_p=G_p-\EE G_p$, the same bound holds for $\EE H_p^2$ up to an absolute constant.

Combining the two prime ranges,
\begin{align*}
\sum_{p\leq k}(\log p)^2\EE H_p^2
&\ll
\sum_{p\leq\sqrt k}(\log p)^2\left(1+\left(\frac{\log k}{\log p}\right)^2\right)
+k\sum_{\sqrt k<p\leq k}\frac{(\log p)^2}{p^2}  \\
&\ll \sqrt k(\log k)^2=o(k\log k).
\end{align*}
Therefore $\EE_X|R_k^{\leq Q_1}|^2=o(k\log k)$, and the tail estimate completes the proof.
\end{proof}

\begin{proof}[Proof of Theorem \ref{thm:clt}]
For all sufficiently large $X$, $k\leq A\log X<X\leq n$ on $\II_X$.  By Lemma \ref{lem:kummer} and the definition of $m(k)$,
\[
        U_k(n)-m(k)=W_k(n)+R_k(n).
\]
Lemma \ref{lem:variance_constant} gives $V(k)\sim \kappa k\log k$, with $\kappa=2-\log(2\pi)>0$.  Lemma \ref{lem:higher_powers} gives
\[
        \frac{R_k(n)}{\sqrt{V(k)}}\to0
        \qquad\text{in }L^2(\PP_X),
\]
and Lemma \ref{lem:prime_clt} gives
\[
        \frac{W_k(n)}{\sqrt{V(k)}}\Rightarrow \normal(0,1).
\]
The first asserted central limit theorem follows from Slutsky's theorem.

It remains to justify the statements about the interval mean and variance.  By the $r=1$ case of the moment transfer in Lemma \ref{lem:prime_clt},
\[
        \EE_X W_k(n)=o(\sqrt{V(k)}).
\]
By Cauchy's inequality and Lemma \ref{lem:higher_powers},
\[
        \EE_X R_k(n)=o(\sqrt{k\log k})=o(\sqrt{V(k)}).
\]
Therefore
\[
        \EE_X U_k(n)=m(k)+o(\sqrt{V(k)}).
\]
For the second moment, the decomposition and Cauchy's inequality imply
\begin{align*}
\EE_X|U_k(n)-m(k)|^2
&=\EE_X W_k(n)^2
  +O\left((\EE_XW_k(n)^2)^{1/2}(\EE_XR_k(n)^2)^{1/2}\right)\\
&\qquad +\EE_XR_k(n)^2  \\
&=V(k)+o(k\log k).
\end{align*}
Here $\EE_XW_k(n)^2=V(k)+o(V(k))$ follows from the $r=2$ case of Lemma \ref{lem:prime_clt}.  Subtracting the square of the mean shift, which is $o(V(k))$, gives
\[
        \Var_X(U_k(n))\sim V(k).
\]
The fully standardized central limit theorem follows by another application of Slutsky's theorem, using both
\[
        \frac{\EE_X U_k(n)-m(k)}{\sqrt{V(k)}}\to0
        \qquad\text{and}\qquad
        \frac{\Var_X(U_k(n))}{V(k)}\to1.
\]
\end{proof}

\section{Concluding remarks}\label{sec:remarks}

Theorem \ref{thm:main} describes the normal order of the first crossing for the general threshold $u(n,k)>n^c$.  Its density-one formulation is essential: the proof deliberately discards a zero-density set of integers for which large prime powers with small bases divide one of the nearby integers
\[
        n,n-1,\ldots,n-\lfloor A\log X\rfloor.
\]
Such congruence obstructions are invisible at natural density one but can matter in worst-case questions.  This is why the result should not be quoted as a resolution of the original pointwise problem.  What it supplies is the typical location of the first crossing, including the exact leading constant.

The constant $1-\gamma$ is the key arithmetic feature in the normal-order theorem.  A first-order heuristic might suggest that the small-prime part crosses $n^c$ when $k\approx c\log n$.  The complete-residue average instead equals
\[
        k\sum_{p\leq k}\frac{\log p}{p-1}-\log k!
        =(1-\gamma)k+o(k),
\]
which moves the normal-order threshold to
\[
        \frac{c}{1-\gamma}\log n.
\]
For the original threshold $u(n,k)>n^2$, this gives the special case
\[
        \frac{2}{1-\gamma}\log n=4.730544237\ldots\log n
\]
for almost all $n$.

The fluctuation theorem has a different constant.  On the scale $\sqrt{k\log k}$, the higher prime-power levels are negligible and the variance comes from the primes $p\leq k$ alone.  This yields
\[
        2-\log(2\pi)
        =\int_1^\infty \frac{\fracp{y}(1-\fracp{y})}{y^2}\,dy.
\]

\section*{Acknowledgements}
The author acknowledges the use of OpenAI's ChatGPT during the preparation of this manuscript. While it was used for ideation, formulation, proof exploration and refinement, narrowing the search space, programming, LaTeX formatting and other forms of orchestration, the author nonetheless takes full responsibility for the accuracy of the final contents of this paper.

\end{document}